\newcommand{\ZZ}{{\mathbb Z}}
\newcommand{\PP}{{\mathbb P}}
\newtheorem{theorem}{Theorem}[section]
\newtheorem{proposition}[theorem]{Proposition}
\newtheorem{corollary}[theorem]{Corollary}
\newtheorem{lemma}[theorem]{Lemma}
\newtheorem{definition}[theorem]{Definition}
\newtheorem{remark}[theorem]{Remark}
\numberwithin{equation}{subsection}
\renewcommand\section{\@startsection {section}{1}{\z@}%
    {-3.5ex \@plus -1ex \@minus -.2ex}%
    {2.3ex \@plus.2ex}%
    {\normalfont\fontsize{18}{19}\bfseries}}
\renewcommand\subsection{\@startsection {subsection}{1}{\z@}%
    {-1.5ex \@plus -1ex \@minus -.2ex}%
    {1.3ex \@plus.2ex}%
    {\normalfont\fontsize{13}{14}\bfseries}}
\newcommand\xleftrightarrow[2][]{%
  \ext@arrow 9999{\longleftrightarrowfill@}{#1}{#2}}
\newcommand\longleftrightarrowfill@{%
  \arrowfill@\leftarrow\relbar\rightarrow}
\title{\bf Random nearest neighbor graphs: the translation invariant case}
\author[1]{Bounghun Bock$^1$, Michael Damron}
\author[2]{Jack Hanson}
\affil[1]{Georgia Tech}
\affil[2]{City University of New York, City College and Graduate Center}
\date{}
\begin{document}

\baselineskip20pt

\maketitle

\begin{abstract}
If $(\omega(e))$ is a family of random variables (weights) assigned to the edges of $\mathbb{Z}^d$, the nearest neighbor graph is the directed graph induced by all edges $\langle x,y \rangle$ such that $\omega(\{x,y\})$ is minimal among all neighbors $y$ of $x$. That is, each vertex points to its closest neighbor, if the weights are viewed as edge-lengths. Nanda-Newman introduced nearest neighbor graphs when the weights are i.i.d.~and continuously distributed and proved that a.s., all components of the undirected version of the graph are finite. We study the case of translation invariant, distinct weights, and prove that nearest neighbor graphs do not contain doubly-infinite directed paths. In contrast to the i.i.d.~case, we show that in this stationary case, the graphs can contain either one or two infinite components (but not more) in dimension two, and $k$ infinite components for any $k \in [1,\infty]$ in dimension $\geq 3$. The latter constructions use a general procedure to exhibit a certain class of directed graphs as nearest neighbor graphs with distinct weights, and thereby characterize all translation invariant nearest neighbor graphs. We also discuss relations to geodesic graphs from first-passage percolation and implications for the coalescing walk model of Chaika-Krishnan.
\end{abstract}

%
%
%
%
%
%

\section{Introduction}
Random nearest neighbor graphs were introduced by Nanda-Newman \cite{MR1716765} in the context of the cubic lattice $\mathbb{Z}^d$, but we will define them on general graphs. The directed nearest neighbor graph $\mathcal{N}_D$ is defined on a given graph $G = (V,E)$ using real-valued edge-weights $(\omega(e))_{e \in E}$. We will assume that $G$ does not have self-loops (it has no edges of the form $\{v,v\}$ for $v \in V$) and does not have multiple edges between any two vertices (so that each edge is uniquely identified by its endpoints). The vertex set of $\mathcal{N}_D$ is $V$ and the edge set is the set
\[
\left\{ \langle x,y \rangle : \{x,y\} \in E \text{ and } \omega(\{x,y\}) \leq \omega(\{x,z\}) \text{ for all }z \text{ with } \{x,z\} \in E\right\}.
\]
That is, each vertex points to its neighbors $y$ which minimize the weight $\omega(\{x,y\})$. If some vertex $x$ has infinite degree, and no neighbor minimizes $\omega$, then $x$ does not point to any neighbor. The undirected nearest neighbor graph $\mathcal{N}$ is the undirected version of $\mathcal{N}_D$, with vertex set $V$ and edge set $\{\{x,y\} : \langle x, y \rangle \text{ is an edge of }\mathcal{N}_D\}$. Note that if a vertex has finite degree in $G$, it has out-degree at least one in $\mathcal{N}_D$ and if the weights $(\omega(e))$ are all distinct, any vertex has out-degree at most one in $\mathcal{N}_D$.


\subsection{Background}

In  \cite{MR1716765}, Nanda-Newman studied nearest neighbor graphs on $\mathbb{Z}^d$ in the case that the weights are i.i.d.~with a common uniform $(0,1)$ distribution. (Here, a.s.~each vertex has out-degree exactly one in $\mathcal{N}_D$.) One of their main results was that a.s., $\mathcal{N}$ has only finite components and, further, that connection probabilities decay rapidly: 
\[
\mathbb{P}\left(0 \text{ and } x \text{ are in the same component of } \mathcal{N}\right) \leq \frac{C^{\|x-y\|_1}}{\|x-y\|_1!}.
\]
\cite[Lemma~2.3]{MR1716765} also contains a complete description of all finite clusters of $\mathcal{N}$: if $\mathcal{C}$ is one of the connected components of $\mathcal{N}$, let $\mathcal{C}_D$ be the directed subgraph of $\mathcal{N}_D$ induced by the vertices of $\mathcal{C}$. Then a.s. for all $\mathcal{C}$,
\begin{enumerate}
\item $\mathcal{C}$ is a tree,
\item $\mathcal{C}_D$ contains exactly one ``miniloop'' between some vertices $x$ and $y$, and
\item every edge in $\mathcal{C}_D$ (besides $\langle x,y \rangle$ and $\langle y,x \rangle$) is directed toward both $x$ and $y$.
\end{enumerate}
Here, a ``miniloop'' is a directed circuit of length two (see the definition of a directed circuit in the next section). These results give a more-or-less complete description of i.i.d.~nearest neighbor graphs on $\mathbb{Z}^d$, and were used by Nanda-Newman to study ``influence graphs'' arising from energy minimization procedures in disordered Ising models.



In this paper, we study the structure of $\mathcal{N}$ and $\mathcal{N}_D$ under weaker assumptions on the weights: that they are translation invariant and a.s.~distinct. We first show in Theorem~\ref{theorem: structure of path in N} that under these general conditions, $\mathcal{N}_D$ cannot contain doubly-infinite directed paths. Furthermore, although these graphs in the i.i.d.~case contain only finite components, in Corollary~\ref{cor: main_result}, we prove that for dimension $d=2$, $\mathcal{N}$ can contain exactly one or exactly two infinite components, and for dimensions $d \geq 3$ and any $k \in [1,\infty]$, $\mathcal{N}$ can contain exactly $k$ infinite components. (Theorem~\ref{theorem: structure of path in N} immediately implies that for $d=1$ there are only finite components; see Remark~\ref{rem: one_dimension}.) These constructions follow from a general result, Theorem~\ref{thm: construction}, which shows that directed graphs with certain properties (all vertices have out-degree one, there are no directed cycles of length at least three, and there are no doubly-infinite directed paths) can be realized as nearest neighbor graphs with distinct weights. In contrast to the situation in dimension $d \geq 3$, for dimension two, we show in Theorem~\ref{thm: two_components} that $\mathcal{N}$ cannot have more than two infinite components. This result follows from a detailed analysis of the topological structure of infinite components in the plane.

Although random nearest neighbor graphs on $\mathbb{Z}^d$ appear only to have been studied by Nanda-Newman, similar graphs have appeared in the literature. For example, \cite{HarrisMeester} studies percolation properties of some neighbor graphs, and in the context of Poisson models \cite{Ballister, HaggstromMeester, Kozakova} and bipartite graphs \cite{Pittel}, several authors have studied nearest neighbor-type models. Furthermore, Chaika-Krishnan \cite{CK, CK2} have introduced models of stationary coalescing walks, and the related directed graphs share some of the features of our graphs. In Section~\ref{sec: arjun}, we will explain the implications our results have for their models.

One motivation for studying nearest neighbor models with translation invariant weights comes from geodesic graphs constructed in first-passage percolation \cite{AH,BDH20,DH14}. These are distributional limits of directed graphs whose edge sets are unions of point-to-hyperplane geodesics. It is known that geodesic graphs in any dimension $d \geq 2$ do not contain doubly-infinite paths, but each of their vertices has out-degree one and there are no directed cycles. Therefore they satisfy the conditions of Theorem~\ref{thm: construction} and can be realized as nearest neighbor graphs. It is an important question to determine the number of infinite components of geodesic graphs in general dimensions, It is known that there is only one component in two dimensions, but for $d\geq 3$ the number of infinite components is not even known to be 1 or infinity. The results of this paper show that there are nearest neighbor graphs in any dimension $d \geq 3$ with any number of infinite components, so any work on these questions for geodesic graphs must use more detailed properties of the percolation model.

%
%


 \subsection{Main Results}
 
Our probabilistic results will concern the cubic lattice $\left( \mathbb{Z}^d, \mathcal{E}^d\right)$ with translation invariant weights. (General graphs are considered below.) For this reason, our probability space will be the product space $\Omega = \mathbb{R}^{\mathcal{E}^d}$ for some $d \geq 1$ with the product Borel sigma-algebra. Our probability measure $\mathbb{P}$ will be assumed to satisfy the following conditions:

\medskip
\noindent
\textbf{Assumption A: \label{asd} } 

\begin{enumerate}
\item[{\bf A1.}] $\PP$ is translation invariant. That is, for any $z \in \ZZ^d$, $\PP = \PP \circ T_z^{-1}$, where $T_z$ is the translation by $z$: for $\omega \in \Omega$, 
\begin{equation}\label{eq: T_z}
T_z \omega  \in \Omega \text{ is given by } \left( T_z\omega\right)(e) = \omega(e+z),
\end{equation}
where $e+z = \{x+z,y+z\}$ if $e = \{x,y\}$.
\item[{\bf A2.}] For any distinct $e,f \in \mathcal{E}^d$, $\PP \left( \omega(e) =\omega(f) \right) =0$.
\end{enumerate}
 
Note that assumption {\bf A} holds if the weights are i.i.d.~with a continuous common distribution. Furthermore, under item {\bf A2}, a.s.~every vertex has out-degree exactly one in $\mathcal{N}_D$.
 
Our first result states that under assumption {\bf A}, $\mathcal{N}_D$ has no infinite backward paths. For its statement, if $x,y$ are vertices of a directed graph, we write $x \to y$ if there is a directed path from $x$ to $y$. We use the convention that $x \to x$ for any $x$, so the graph $C_x$ defined below always has at least one vertex.
\begin{theorem} \label{theorem: structure of path in N} 
Let $d \geq 1$ and $\mathbb{P}$ be a measure satisfying assumption {\bf A}. For any $x \in \mathbb{Z}^d$, write $C_x$ for the subgraph of $\mathcal{N}_D$ induced by the vertices $y$ such that $y \to x$ in $\mathcal{N}_D$. Then
\[
\text{a.s., } C_x \text{ is finite for all }x \in \mathbb{Z}^d.
\] 
\end{theorem}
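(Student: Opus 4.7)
The plan is to argue by contradiction: assume $q := \mathbb{P}(|C_0| = \infty) > 0$ (the value is independent of the basepoint by {\bf A1}), write $B := \{v \in \mathbb{Z}^d : |C_v| = \infty\}$, denote by $\phi(v)$ the a.s.\ unique out-neighbor of $v$ in $\mathcal{N}_D$ (from {\bf A2}), and set $S(v) := \min_{e \ni v} \omega(e)$. First I would record the basic monotonicity of $S$: if $z_{i+1}$ points to $z_i$ in $\mathcal{N}_D$ and $z_{i-1} \neq z_{i+1}$, then uniqueness of the minimum at $z_i$ gives $S(z_{i+1}) = \omega(\{z_i, z_{i+1}\}) > \omega(\{z_{i-1}, z_i\}) = S(z_i)$, so $S$ strictly increases along any simple backward path and strictly decreases along a forward orbit $v, \phi(v), \phi^2(v), \ldots$ contained in a bi-infinite directed path. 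Two easy consequences: $B$ is forward-closed under $\phi$ (because $C_v \subset C_{\phi(v)}$), and by K\"onig's lemma applied to the locally finite in-tree of any $v \in B$, each $v \in B$ admits an infinite simple backward path --- hence has at least one in-neighbor lying in $B$.

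I would then apply the mass-transport principle to $f(x, y, \omega) = \mathbf{1}\{x \in B,\, \phi(x) = y\}$, which is translation-covariant using $B(T_{-z}\omega) = B(\omega) + z$ and $\phi_{T_{-z}\omega}(v) = \phi_\omega(v - z) + z$. The outgoing mass from $x$ is $\mathbf{1}\{x \in B\}$, the incoming mass at $y$ equals the number of $B$-in-neighbors of $y$, and this count vanishes for $y \notin B$ by forward-closure. MTP thus yields $\mathbb{P}(0 \in B) = \mathbb{E}[\#\{B\text{-in-neighbors of }0\}]$, and combined with the a.s.\ lower bound of $1$ on $\{0 \in B\}$ this forces the in-degree of every $v \in B$ inside $B$ to equal exactly $1$ almost surely. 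Thus $\mathcal{N}_D$ restricted to $B$ is a permutation whose components are miniloops or bi-infinite directed paths (cycles of length $\geq 3$ being excluded by monotonicity of $S$). A miniloop $\{u,v\} \subset B$ is ruled out by counting: every other in-neighbor of $u$ or $v$ must lie in $B^c$ and so has finite descendant set, and since each vertex has at most $2d-1$ other in-neighbors, $C_v$ becomes a finite union of finite sets --- contradicting $v \in B$. Hence $B$ is a disjoint union of bi-infinite directed paths, and in particular $S(\phi(0)) < S(0)$ a.s.\ on $\{0 \in B\}$.

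For the final contradiction I would apply a second MTP to $g(x, y, \omega) = \mathbf{1}\{x \in B,\, \phi(x) = y\} \cdot h(S(y))$ for arbitrary bounded measurable $h$. The outgoing mass from $0$ is $\mathbf{1}\{0 \in B\} h(S(\phi(0)))$, and using the uniqueness of the $B$-in-neighbor of $0$ from the previous paragraph, the incoming mass at $0$ is $\mathbf{1}\{0 \in B\} h(S(0))$. Hence $S(0) \stackrel{d}{=} S(\phi(0))$ under the Palm measure $\mathbb{P}(\cdot \mid 0 \in B)$. But if $X, Y$ are a.s.-finite real random variables with $X \stackrel{d}{=} Y$ and $Y < X$ a.s., then $\{X \leq t\} \subset \{Y \leq t\}$ together with $\mathbb{P}(X \leq t) = \mathbb{P}(Y \leq t)$ gives $\mathbb{P}(Y \leq t < X) = 0$ for every $t$, and Fubini in $t$ yields $\mathbb{E}[X - Y] = 0$ --- impossible since $X - Y > 0$ a.s. So $q = 0$, and a countable union over $x \in \mathbb{Z}^d$ completes the proof.

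The hard part will be the middle paragraph: MTP alone supplies only the expected in-degree of $B$ inside itself, and the actual work is combining forward-closure of $B$ with the combinatorial miniloop-exclusion argument (exploiting the bounded degree of $\mathbb{Z}^d$ together with the finite descendant sets of $B^c$-vertices) to pin down the component structure of $\mathcal{N}_D$ on $B$ as a union of bi-infinite paths. Once that structure is in hand, the final distributional contradiction is a clean one-shot MTP weighted by $h \circ S$.
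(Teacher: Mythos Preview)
Your proof is correct and takes a genuinely different route from the paper's. Both arguments are organized around mass transport, but the contradictions are produced by different mechanisms.

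The paper first shows (via an MTP sending mass to the two vertices of the terminal miniloop) that $\Gamma_0$ finite implies $C_0$ finite, so on $\{|C_0|=\infty\}$ the forward orbit $\Gamma_0$ is an infinite self-avoiding path with strictly decreasing edge-weights. It then fixes a deterministic threshold $r$ with $\mathbb{P}(|C_0|=\infty,\ I_0<r<S_0)>0$, defines the \emph{$r$-descendant} of $x$ as the last vertex of $\Gamma_x$ whose out-edge has weight $\geq r$, and transports unit mass from each vertex to its $r$-descendant. Outgoing mass is $\leq 1$, but on the event above every one of the infinitely many vertices of $C_0$ shares the same $r$-descendant, so some vertex receives infinite mass --- contradiction.

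Your approach instead extracts structural information about $B$ first: the MTP on $f$ together with K\"onig forces the $B$-in-degree of each $v\in B$ to be exactly one, and your miniloop-exclusion argument (using bounded degree and the finiteness of $C_w$ for $w\in B^c$) then identifies $\phi|_B$ as a union of bi-infinite directed paths. The contradiction comes not from mass concentration at a single vertex, but from the distributional identity $S(0)\stackrel{d}{=}S(\phi(0))$ under $\mathbb{P}(\cdot\mid 0\in B)$ colliding with the a.s.\ strict inequality $S(\phi(0))<S(0)$.

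The paper's argument is more direct once one sees the $r$-descendant trick; yours requires the extra structural step (in-degree one plus miniloop exclusion) but in exchange gives a sharper picture of what $B$ would have to look like, and avoids any choice of threshold. Both proofs extend to the unimodular setting in the same way. One small remark: your miniloop-exclusion step implicitly subsumes the paper's separate treatment of the case ``$\Gamma_0$ finite'' (their item~2), since a terminal miniloop of $\Gamma_0$ would lie in $B$ by forward-closure and is then ruled out by your counting argument.
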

\noindent
We will prove Theorem~\ref{theorem: structure of path in N} in Section~\ref{sec: second_proof}. Because it relies on the mass transport principle, the argument can be extended to more general graphs satisfying the unimodular condition (see \cite[Sec.~8.2]{lyonsperes}). 

\begin{remark}\label{rem: one_dimension}
By Theorem~\ref{theorem: structure of path in N}, in the case $d=1$, all components of $\mathcal{N}$ must be finite. Indeed, by translation invariance, a.s.~one of the following three must occur: (a) all edges in $\mathcal{N}_D$ point left, (b) all edges point right, or (c) there are infinitely many left-pointing and right-pointing edges (and each left-pointing edge has a right-pointing edge somewhere to its left and somewhere to its right). Cases (a) and (b) cannot occur by the theorem. In case (c), all components of $\mathcal{N}$ are finite.
\end{remark}

Theorem~\ref{theorem: structure of path in N} states that nearest neighbor graphs cannot have infinite backward paths in any dimension. It is natural then to ask whether they can have infinite components at all and, if so, then how many there can be. In the next result we show that in two dimensions, there can be at most two infinite components.
\begin{theorem}\label{thm: two_components}
Let $d=2$ and $\mathbb{P}$ be a measure satisfying assumption {\bf A}. A.s., $\mathcal{N}$ has at most two infinite components.
\end{theorem}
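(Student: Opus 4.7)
The plan is to first reduce the theorem to a purely structural/topological statement about spines, and then derive a contradiction from a hypothetical triple of infinite components using planarity.

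Structure of infinite components. I first want to show that each infinite component of $\mathcal{N}$ is a one-ended tree with a unique infinite directed path (a ``spine''). For this, note that any directed cycle $y_1 \to y_2 \to \cdots \to y_k \to y_1$ in $\mathcal{N}_D$ of length $k \geq 3$ is impossible: minimality of each out-edge at $y_i$ (together with the fact that $y_{i-1}$ is a neighbor of $y_i$) gives $\omega(\{y_k,y_1\}) > \omega(\{y_1,y_2\}) > \cdots > \omega(\{y_{k-1},y_k\}) > \omega(\{y_k,y_1\})$, a contradiction. Hence the only cycles are miniloops. Any component containing a miniloop $\{x,y\}$ equals $C_x \cup C_y$, which is finite by Theorem~\ref{theorem: structure of path in N}; so infinite components are miniloop-free trees. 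Starting from any vertex, iterating the unique out-edge produces a forward trajectory that must be injective (no cycle to enter), so it escapes every finite set, producing a spine $\pi$. Out-degree-$1$ forces $\pi$ to be unique up to initial segment, and Theorem~\ref{theorem: structure of path in N} guarantees that the finite ``tributary'' trees of in-edges attached along $\pi$ are all finite.

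Topological setup. Suppose for contradiction that $\mathcal{N}$ has (at least) three infinite components $\mathcal{C}_1, \mathcal{C}_2, \mathcal{C}_3$ with respective spines $\pi_1, \pi_2, \pi_3$. These spines are vertex-disjoint, self-avoiding, one-sided infinite lattice paths. Pass to the one-point compactification $S^2 = \mathbb{R}^2 \cup \{\infty\}$: each $\pi_i \cup \{\infty\}$ is a simple arc, and the three arcs meet only at $\infty$. Then $S^2 \setminus (\pi_1 \cup \pi_2 \cup \pi_3 \cup \{\infty\})$ has exactly three open ``sector'' components $R_{12}, R_{23}, R_{31}$. A vertex $v$ whose forward trajectory reaches $\pi_k$ must lie in $\mathcal{C}_k$; since that trajectory is a lattice path, it cannot cross any $\pi_i$ without going through one of its vertices and thereby actually joining $\mathcal{C}_i$. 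Consequently each vertex in the sector $R_{ij}$ belongs to $\mathcal{C}_i$ or $\mathcal{C}_j$, and in particular $\mathcal{C}_k \cap R_{ij} = \emptyset$ whenever $\{i,j,k\} = \{1,2,3\}$.

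Deriving the contradiction. This is the heart of the argument and what I expect to be the main obstacle. I would aim to combine the sector decomposition with the in-degree constraints along the middle spine. In any cyclic order at $\infty$ one of the spines, say $\pi_2$, is ``between'' the other two: both sectors adjacent to $\pi_2$ have another spine as their other boundary. The subtrees hanging off $\pi_2$ must therefore live in the two flanking sectors, and each boundary edge of $\mathcal{C}_2$ going to $\mathcal{C}_1$ or $\mathcal{C}_3$ is an edge of $\mathbb{Z}^2$ omitted from $\mathcal{N}$ on both sides. Taking a large box $B_n$, each spine exits $\partial B_n$ and has an entire tail outside $B_n$ once $n$ is large enough, yielding three pairwise disjoint infinite arcs in $\mathbb{Z}^2 \setminus B_n^\circ$ from $\partial B_n$ to $\infty$. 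Planarity forces a cyclic ordering of these exits on $\partial B_n$ which is stable in $n$. I would then translate this stability into a positive-density event under $\mathbb{P}$ (a vertex being in the middle sector at all scales) and combine it with a mass-transport argument sending mass from each vertex to its asymptotic spine. The goal is to derive a Burton--Keane-style contradiction: three pairwise disjoint infinite components in the plane would force a ``trifurcation at infinity'' whose positive density clashes with the $O(n)$ growth of boundaries of $B_n$. The delicate step, and the one requiring the most careful execution, is showing that the sector structure forces genuinely disjoint ``branches'' reaching $\partial B_n$ whose number grows with $n$; this is where the topological subtleties of embedding one-ended trees into $\mathbb{Z}^2$ enter, and where I anticipate having to argue case-by-case about how the tributary subtrees along the middle spine interact with the outer spines.
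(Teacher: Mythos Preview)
Your proposal contains a fundamental topological error in the setup. You claim that the three spines $\pi_1, \pi_2, \pi_3$, compactified by adding $\infty$, separate $S^2$ into three sectors $R_{12}, R_{23}, R_{31}$. But each $\pi_i$ is a \emph{singly}-infinite self-avoiding path, so $\pi_i \cup \{\infty\}$ is an arc from a finite starting point $p_i$ to $\infty$. The union of three such arcs meeting only at $\infty$ is a triod (a tree with one branch point), and a tree embedded in $S^2$ has \emph{connected} complement --- there are no sectors, and the partition on which your subsequent reasoning rests does not exist. To get a separating configuration you would need the arcs to share \emph{both} endpoints (a theta-graph), which here would require the spines to have a common finite starting vertex; they do not, since the components are disjoint. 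Even setting this aside, your claim that every vertex of $R_{ij}$ lies in $\mathcal{C}_i \cup \mathcal{C}_j$ is false: vertices in finite components of $\mathcal{N}$, or in a fourth infinite component whose spine runs entirely inside $R_{ij}$, belong to neither. And your ``Deriving the contradiction'' paragraph is an outline of hopes rather than an argument; you flag the decisive step as unproven.

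The paper's route avoids these pitfalls by working not with spines but with the \emph{closure} $\overline{V(C)}$ of each infinite component (filling in all finite holes of $V(C)^c$) and its dual edge-boundary $B(C)$. The crucial structural step is a proposition showing that $B(C)$ is either empty or a \emph{single doubly-infinite self-avoiding dual path}; this is exactly where Theorem~\ref{theorem: structure of path in N} enters, via a Jordan-curve argument showing that two distinct bi-infinite pieces of $B(C)$ would force some $z$ with $\#C_z = \infty$. Once each $\overline{V(C)}$ is established as a topological half-plane (a genuinely two-sided object, unlike a singly-infinite spine), the paper decomposes $\mathbb{Z}^2$ into the closures of infinite components together with the site-components of what remains, and feeds this into the Burton--Keane ``ribbons and rocks'' results for stationary two-dimensional site percolation to rule out three such half-planes. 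Your spine observation correctly encodes one-endedness of infinite components, but it is the dual-boundary formulation that produces the bi-infinite separating curves needed to make the planarity argument go through.
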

We prove Theorem~\ref{thm: two_components} in Section~\ref{sec: dimension_two_proof}. The argument gives more information than what is stated in the theorem. It shows that if there are two infinite components, their closures in $\mathbb{Z}^2$ must be topological half-planes possibly separated by infinitely many finite components. In Remark~\ref{rem: topology}, we show that such finite separating components need not exist and, if they do, either they can be isolated from each other or the union of their vertex sets can be an (infinite) topological strip. 
 
In the third result, we show that certain directed graphs (and therefore certain random graph models) can be realized as nearest neighbor graphs. In its statement, a directed cycle of length $\ell$ in a directed graph is a sequence of directed edges $\langle x_0,x_1\rangle, \langle x_1, x_2 \rangle, \dots, \langle x_{\ell-1},x_\ell \rangle$ such that $x_0, \dots, x_{\ell-1}$ are all distinct and $x_\ell = x_0$.
\begin{theorem}\label{thm: construction}
Let $G=(V,E)$ be a graph such that $E$ is countable and let $\mathbb{G} = (V,\mathbb{E})$ be a directed graph with the same vertex set $V$. Assume that
\begin{enumerate}
\item if $\langle x,y \rangle \in \mathbb{E}$ then $\{x,y\} \in E$,
\item each $x \in V$ has out-degree one in $\mathbb{G}$,
\item $\mathbb{G}$ has no directed cycles of length at least three, and
\item for each vertex $x \in V$, writing $C_x$ for the subgraph of $\mathbb{G}$ induced by $y \in V$ such that $y \to x$ in $\mathbb{G}$, $C_x$ is finite.
\end{enumerate}
There exists a collection $(\omega(e))_{e \in E}$ of distinct weights such that the nearest neighbor graph $\mathcal{N}_D$ corresponding to these weights is $\mathbb{G}$.
\end{theorem}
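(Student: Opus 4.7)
The plan is to construct $\omega$ so that for each $x \in V$ with unique out-neighbor $f(x)$ in $\mathbb{G}$ (well-defined by hypothesis 2), the edge $\{x, f(x)\}$ strictly minimizes $\omega$ among all $G$-edges incident to $x$; this is exactly the condition $\mathcal{N}_D = \mathbb{G}$. Write $\mathbb{E}_0 = \{\{x, f(x)\} : x \in V\}$ for the undirected version of $\mathbb{E}$. I will assign weights in $(0, 1/2)$ to edges of $\mathbb{E}_0$ and distinct weights in $(1/2, 1)$ to edges of $E \setminus \mathbb{E}_0$, so interval separation automatically handles every competing edge outside $\mathbb{E}_0$.

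The key step is assigning the values on $\mathbb{E}_0$. Define a binary relation $\prec$ on $\mathbb{E}_0$ by: $e' \prec e$ when there exist $x, y \in V$ with $e = \{x, y\}$, $f(x) = y$, $f(y) \neq x$, and $e' = \{y, f(y)\}$. This encodes the necessary inequality $\omega(e') < \omega(e)$, since $\{y, f(y)\}$ must be the $\omega$-minimizer at $y$ while $e$ is a competing $G$-edge at $y$. I claim $\prec$ is acyclic. A hypothetical $\prec$-cycle $e_0 \succ e_1 \succ \cdots \succ e_k = e_0$ produces vertices $v_i := f^i(v_0)$ with $e_i = \{v_i, v_{i+1}\}$ and the built-in constraint $f(v_{i+1}) \neq v_i$ for each $i < k$. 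The cycle condition $e_k = e_0$ splits into two subcases: either $(v_k, v_{k+1}) = (v_1, v_0)$, in which case $f(v_1) = v_{k+1} = v_0$ directly contradicts the $i = 0$ constraint; or $(v_k, v_{k+1}) = (v_0, v_1)$, which forces the $f$-orbit of $v_0$ to be periodic with period $p \mid k$ and $p \geq 2$ (no self-loops), so $p = 2$ by hypothesis 3, making $\{v_0, v_1\}$ a miniloop and again giving $f(v_1) = v_0$, the same contradiction.

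Because $E$ is countable, so is $\mathbb{E}_0$, and the transitive closure $\prec^*$ is a countable strict partial order. Any such order embeds order-preservingly into $(\QQ, <)$: extend it to a total order by a Szpilrajn-type recursive argument on a fixed enumeration, then embed the resulting countable linear order into $\QQ$ by Cantor's back-and-forth procedure. Composing with an order isomorphism $\QQ \cong \QQ \cap (0, 1/2)$ yields distinct values $\omega(e) \in (0, 1/2)$ for $e \in \mathbb{E}_0$ respecting $\prec^*$, and the countably many edges of $E \setminus \mathbb{E}_0$ receive arbitrary distinct values in $(1/2, 1)$.

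For verification, fix $x \in V$ and a neighbor $z \neq f(x)$ of $x$ in $G$. If $\{x, z\} \in \mathbb{E}_0$, then $f(z) = x$ (because $f(x) \neq z$), and taking $(z, x)$ in the role of $(x, y)$ in the definition of $\prec$ gives $\{x, f(x)\} \prec \{x, z\}$, so $\omega(\{x, f(x)\}) < \omega(\{x, z\})$. If $\{x, z\} \notin \mathbb{E}_0$, the inequality is immediate from interval separation. Hence $f(x)$ is the unique $\omega$-minimizing neighbor of $x$, so $\mathcal{N}_D = \mathbb{G}$. The main obstacle is the acyclicity argument in the second paragraph: one must unwind a candidate cycle into a forward $f$-orbit and then eliminate the miniloop subcase using both hypothesis 3 and the auxiliary no-reversal clause $f(v_{i+1}) \neq v_i$ baked into the definition of $\prec$. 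Hypothesis 4 does not appear in the construction but is consistent with the conclusion and matches the properties verified elsewhere in the paper.
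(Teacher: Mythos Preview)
Your argument is correct, and it takes a genuinely different route from the paper's. The paper defines the weights by an explicit formula: for $e=\{x,f(x)\}\in\mathsf{E}$ it sets $\omega(e)=1/(V(e)+U(e))$ with $V(e)=\#C_x$ and $U(e)$ i.i.d.\ uniform, and it proves the required inequality $\omega(\{x,f(x)\})<\omega(\{z,x\})$ from the strict containment $C_z\subsetneq C_x$ (which uses hypothesis~3). Your approach instead isolates exactly the binary relation $\prec$ on $\mathbb{E}_0$ that the weights must respect, checks it is acyclic, and then invokes Szpilrajn plus an embedding of a countable linear order into $\mathbb{Q}\cap(0,1/2)$. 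Both proofs separate $\mathbb{E}_0$ from $E\setminus\mathbb{E}_0$ by an interval, so the only real difference is how the ordering on $\mathbb{E}_0$ is produced.

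Your version has the pleasant feature that it never uses hypothesis~4, so it actually proves a stronger deterministic statement (hypotheses 1--3 alone suffice for the existence of distinct weights realizing $\mathbb{G}$). On the other hand, the paper's explicit formula is doing work beyond the bare theorem: because $V(e)$ is translation-covariant and the $U(e)$ are i.i.d., the resulting weight family is translation-invariant whenever the input random graph $\mathbb{G}$ is, which is exactly what is needed to feed into Corollary~\ref{cor: main_result}. Your construction, by contrast, depends on a fixed enumeration of $\mathbb{E}_0$ and the recursive Szpilrajn/embedding steps, and so does not obviously produce translation-invariant weights from a translation-invariant $\mathbb{G}$. If you want to recover that application you would need to replace the abstract embedding by a covariant rule, at which point something like the paper's use of $\#C_x$ (and hence hypothesis~4) becomes natural.
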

\noindent
The proof of Theorem~\ref{thm: construction} will be given in Section~\ref{sec: first_proof}.

\begin{remark}
Theorem~\ref{thm: construction} states that items 1-4 are sufficient for a given graph $\mathbb{G}$ to be a nearest neighbor graph. If the graph $\mathbb{G}$ is a random directed graph sampled from a translation invariant distribution, then the definition of the weights in \eqref{eq: omega_def} ensures that the corresponding $\mathbb{P}$ satisfies assumption {\bf A}. Conversely, the statement and proof of Theorem~\ref{theorem: structure of path in N} (see the enumerated properties of $\mathcal{N}$ in Section~\ref{sec: second_proof}) show that in the case where $G = \mathbb{Z}^d$ (or more generally, a graph satisfying the unimodular condition), these properties listed in items 1-4 are also necessary. In other words, these results characterize all nearest neighbor graphs under assumption {\bf A}.
\end{remark}


As a consequence, we can construct various different random nearest neighbor graphs on $\mathbb{Z}^d$ with $d \geq 2$ for measures satisfying {\bf A}. 
\begin{corollary} \label{cor: main_result}
\begin{enumerate}
\item Let $d=2$ and $k \in \{0,1,2\}$. There is a measure $\mathbb{P}$ satisfying {\bf A} such that a.s., $\mathcal{N}$ has exactly $k$ infinite components.
\item Let $d \geq 3$ and $k \in \{0, 1, 2, \dots\} \cup \{\infty\}$. There is a measure $\mathbb{P}$ satisfying {\bf A} such that a.s., $\mathcal{N}$ has exactly $k$ infinite components.
\end{enumerate}
\end{corollary}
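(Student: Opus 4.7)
The plan is to apply Theorem~\ref{thm: construction}: for each admissible pair $(d,k)$ I exhibit a translation-invariant random directed graph $\mathbb{G}$ on $\mathbb{Z}^d$ satisfying conditions (1)--(4) of Theorem~\ref{thm: construction} with exactly $k$ infinite components a.s., and then appeal to Theorem~\ref{thm: construction} to obtain weights $(\omega(e))$ with $\mathcal{N}_D = \mathbb{G}$. Since the weight rule produced in the proof of Theorem~\ref{thm: construction} is a translation-equivariant function of $\mathbb{G}$ (as the Remark after Theorem~\ref{thm: construction} notes), translation invariance of $\mathbb{G}$ transfers to the weight law, while distinctness of the weights is automatic. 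Assumption~A thus holds and $\mathcal{N}$ has exactly $k$ infinite components by construction.

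The $k=0$ case in both parts is handled by i.i.d.\ uniform $(0,1)$ weights together with Nanda-Newman. For $k\geq 1$ my basic building block is a \emph{one-ended oriented tree}: a translation-invariant random spanning tree of some translation-invariant subgraph of $\mathbb{Z}^d$ that is a.s.\ one-ended, with each edge oriented toward the unique end. Such a tree automatically satisfies (1)--(3); condition (4) holds because removing any vertex $x$ from a one-ended tree leaves one infinite subtree (containing the end) and finitely many finite subtrees, and $C_x$ is exactly the union of the finite ones pointing at $x$. In $d=2$ the uniform spanning tree of $\mathbb{Z}^2$ is a canonical source (one-ended a.s.\ by Pemantle), and analogous one-ended oriented trees can be produced in higher dimensions via coalescing random walks or geodesic-type constructions.

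For $d=2$, $k=1$ take $\mathbb{G}$ to be the oriented UST on $\mathbb{Z}^2$. For $d=2$, $k=2$ produce a translation-invariant decomposition of $\mathbb{Z}^2$ into two infinite half-plane-like pieces (compatible with Theorem~\ref{thm: two_components} and the discussion in its accompanying remark), separated by a stationary strip of finite components, and place an independent one-ended oriented tree on each piece. For $d\geq 3$ and $k=\infty$, slice $\mathbb{Z}^d=\mathbb{Z}^{d-1}\times\mathbb{Z}$ into slabs $S_n=\mathbb{Z}^{d-1}\times\{n\}$ and run an independent one-ended oriented tree on each slab with no inter-slab edges, yielding one infinite component per slab. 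For $d\geq 3$ and finite $k\geq 1$, modify the slab construction so that a translation-invariantly chosen family of exactly $k$ ``active'' regions carries one-ended trees while the complement carries finite-component fillers coming from i.i.d.\ weights; the extra dimension beyond two is what allows a translation-invariant pinning of the count $k$.

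The main obstacle is the combinatorial construction of $\mathbb{G}$ with the precise prescribed number of infinite components while maintaining translation invariance. The $d=2$, $k=2$ case must respect the constraint of Theorem~\ref{thm: two_components} (no third infinite component can appear), which forces the two infinite pieces to have half-plane-like closures. The finite $k\geq 2$, $d\geq 3$ case is harder still, since one must translation-invariantly pin down exactly $k$ active regions rather than the natural values $0$, $1$, or $\infty$; this is where the hypothesis $d\geq 3$ enters essentially. Once $\mathbb{G}$ has been built, verifying (1)--(4) and invoking Theorem~\ref{thm: construction} are routine.
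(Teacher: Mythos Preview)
Your overall strategy---build a translation-invariant directed graph $\mathbb{G}$ satisfying the hypotheses of Theorem~\ref{thm: construction} and then read off the weights---is exactly what the paper does, and the $k=0$ and $k=\infty$ cases are handled the same way (i.i.d.\ weights, and a slab/layering construction, respectively). For $d\ge 2$, $k=1$ your choice of the oriented uniform spanning tree is a legitimate alternative to the paper's explicit dyadic construction; Pemantle's one-endedness result gives condition~(4), and the UST is translation invariant, so this part is fine and arguably cleaner (though you lose the explicit tail bound on $\#C_x$ that the paper extracts from its dyadic picture, and you would need a substitute for the UST in dimensions $d-1\ge 5$ where the spanning forest has several components).

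The genuine gaps are in the two cases you yourself flag as ``the main obstacle.'' For $d=2$, $k=2$ you write ``produce a translation-invariant decomposition of $\mathbb{Z}^2$ into two half-plane-like pieces \ldots\ and place an independent one-ended oriented tree on each piece,'' but you give no mechanism for producing such a decomposition. This is not a triviality: a deterministic half-plane split is not translation invariant, and a random one has to be built. The paper resolves this by importing a concrete object, the Zerner--Merkl coalescing-walk forest, which already \emph{is} the union of two one-ended trees and comes with translation invariance and the finiteness of $C_x$ proved. Without something of that kind, your $k=2$ case is only a wish list.

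The finite $k\ge 2$, $d\ge 3$ case has the same problem in sharper form. You say ``a translation-invariantly chosen family of exactly $k$ active regions,'' but you cannot pick $k$ slabs and keep translation invariance in the $e_d$ direction, and you do not propose a replacement. The paper's idea here is quite specific and not guessable from your outline: it dilates the lattice by a factor $4k$, places $k$ \emph{interleaved} copies of the $k=1$ graph on the disjoint shifted sublattices $4k\mathbb{Z}^d + j(4,\dots,4)$ for $j=0,\dots,k-1$, and then fills the leftover vertices with deterministic finite components. Translation invariance is recovered by a final uniform shift in $[0,4k)^d$. Until you supply a concrete device of this sort, the $d\ge 3$, $2\le k<\infty$ case is not proved.
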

\noindent
The proof of Corollary~\ref{cor: main_result} will be given in Section~\ref{sec: corollary}. Because nearest neighbor graphs with i.i.d., continuously distributed weights provide examples with $k=0$, we need only focus on the cases $k \geq 1$.

\subsection{Implications for coalescing walks}\label{sec: arjun}

    Our results, especially those of Theorem \ref{thm: two_components} in the  $\mathbb{Z}^2$ setting, relate to some of those of Chaika-Krishnan \cite{CK,CK2}. The ``stationary coalescing walk'' model studied there amounts to a directed random graph $\mathbb{G}$ (whose distribution is ergodic under lattice shifts) on the undirected graph $(\mathbb{Z}^d, \mathcal{E}^d)$, satisfying conditions similar to the items in our Theorem \ref{thm: construction}. The main changes to these conditions are a) there are no cycles (i.e.~``miniloops'' are disallowed), b) $C_x$ is allowed to be infinite, and c) paths are assumed to pass hyperplanes: given any infinite directed path $(x_0, x_1, \ldots)$ in $\mathbb{G}$, we have for each $k$,  $x_i \cdot e_1 > k$ for all large $i$. (The model of \cite{CK} is defined more generally, but these are the assumed conditions for their theorems about component structure on $\mathbb{Z}^2$.)

    In this setting, in the case $d = 2$, Chaika-Krishnan show a dichotomy: either $C_x$ is a.s.~finite for each $x$ and also the undirected version of $\mathbb{G}$ has one component, or a.s.~each infinite directed path in $\mathbb{G}$ contains a site $x$ with $\#C_x = \infty$ (and the undirected version of $\mathbb{G}$ must have infinitely many components). In their language, one says that $\mathbb{G}$ either exhibits coalescence without bi-infinite trajectories, or each component contains a bi-infinite trajectory. While assumption c) is natural in some coalescing walk models (notably first-passage percolation), there are many examples for which it fails, and for these, the dichotomy can be false. See the example in Section~\ref{sec: zerner} below, which exhibits neither bi-infinite trajectories nor coalescence, its undirected version having two infinite components a.s.

    Our Theorem \ref{thm: two_components} shows that this example demonstrates the most extreme failure of coalescence allowed: if $\mathbb{G}$ does not exhibit bi-infinite trajectories, its undirected version has at most two components a.s. In other words, if the undirected version of $\mathbb{G}$ has at least three infinite components, then $\mathbb{G}$ must exhibit bi-infinite trajectories. It is perhaps worth noting that the dichotomy breaks down in other ways without assumption c): for instance, see \cite{BD} for a model which exhibits a bi-infinite trajectory and also exhibits coalescence. Perhaps the techniques used to prove Theorem \ref{thm: two_components} can be used to completely classify the allowed behavior of stationary coalescing walks which do not necessarily pass hyperplanes.


\section{Proofs}
In this section, we prove the main results, starting with Theorem~\ref{theorem: structure of path in N} in Section~\ref{sec: second_proof}, moving to Theorem~\ref{thm: two_components} in Section~\ref{sec: dimension_two_proof}, and finishing with Theorem~\ref{thm: construction} in Section~\ref{sec: first_proof} and Corollary~\ref{cor: main_result} in Section~\ref{sec: corollary}.

\subsection{Proof of Theorem~\ref{theorem: structure of path in N}}\label{sec: second_proof}

Throughout the proof we make assumption {\bf A}. We first note that for $x,y,z \in \mathbb{Z}^d$ with $x \neq z$, from {\bf A2}, 
\begin{equation}\label{eq: monotonicity}
\text{a.s. if } \langle x,y\rangle \text{ and } \langle y,z \rangle \text{ are edges in } \mathcal{N}_D \text{ then } \omega(\{x,y\}) > \omega(\{y,z\}).
\end{equation}
As a consequence, we have the following facts:
\begin{enumerate}
\item $\mathcal{N}_D$ a.s.~has no directed cycles of length at least $3$. To show this, consider an outcome $\omega$ for which $\omega(e) \neq \omega(f)$ for all $e \neq f$ and a directed cycle with vertices $x_0, x_1, \dots, x_k$ such that $\langle x_i, x_{i+1}\rangle \in \mathcal{N}_D$ for $i=0, \dots, k-1$, with $x_0, \dots, x_{k-1}$ all distinct and $x_0=x_k$ (so that the cycle has length $k$). Writing $x_{k+1}$ also for $x_1$, note that if $k \geq 3$ then for each $i = 0, \dots, k-1$, the edges $\{x_i,x_{i+1}\}$ and $\{x_{i+1},x_{i+2}\}$ are distinct and share an endpoint $x_{i+1}$, so $\omega(\{x_i,x_{i+1}\}) > \omega(\{x_{i+1},x_{i+2}\})$. Iterating this bound, we obtain 
\[
\omega(\{x_k,x_{k+1}\}) = \omega(\{x_0,x_1\}) > \dots > \omega(\{x_{k-1},x_k\}) > \omega(\{x_k,x_{k+1}\}),
\]
a contradiction.
\item For $x \in \mathbb{Z}^d$, write $\Gamma_x$ for the subgraph of $\mathcal{N}_D$ induced by the vertices $y$ such that $x \to y$ in $\mathcal{N}_D$. Then a.s.~there are two possibilities:
\begin{enumerate}
\item $\Gamma_x$ is finite and so is $C_x$. Furthermore $\Gamma_x$ ends in a cycle of length two.
\item $\Gamma_x$ is an infinite vertex self-avoiding directed path.
\end{enumerate}
To see why, since each vertex has out-degree one, we can follow each out-edge starting at $x$ and label the vertices in order as $x=x_0, x_1, \dots$. There are two possibilities: either all $x_i$'s are distinct, or there is a first $i \geq 2$ such that $x_i$ is an element of $\{x_0, \dots, x_{i-1}\}$. In the first case, $\Gamma_x$ is an infinite vertex self-avoiding directed path. In the second, item 1 implies that $x_i = x_{i-2}$, and so $\Gamma_x$ ends in a cycle of length two. We are left to prove that in this case, $C_x$ is finite. 

Suppose for a contradiction that, 
\begin{equation}\label{eq: finite_gamma_x_assumption}
\mathbb{P}(C_0 \text{ is infinite but } \Gamma_0 \text{ is finite}) > 0.
\end{equation}
Then, as above, on the event in \eqref{eq: finite_gamma_x_assumption} we can follow $\Gamma_0$ forward until we reach a cycle of length two. Define the following random variable (``mass transport function'') $m(x,y)$ for $x,y \in \mathbb{Z}^d$:
\[
m(x,y) = \begin{cases}
1 &\quad \text{if } y \text{ is in the two-cycle at the end of } \Gamma_x \\
0 &\quad \text{otherwise}.
\end{cases}
\]
(If $\Gamma_x$ is infinite, then $m(x,y) = 0$ for all $y$.) Because $m$ satisfies $m(x,y)(\omega) = m(x-z,y-z)(T^z\omega)$ for $x,y,z \in \mathbb{Z}^d$, where $T^z$ was defined in \eqref{eq: T_z}, {\bf A1} implies that we can apply the mass transport principle (see \cite{haggstrom, lyonsperes} for an introduction) to obtain
\begin{equation}\label{eq: mass_transport}
\mathbb{E} \sum_{x \in \mathbb{Z}^d} m(x,0) = \mathbb{E} \sum_{x \in \mathbb{Z}^d} m(0,x).
\end{equation}
Since $\sum_x m(0,x) \leq 2$ a.s., the left side is $\leq 2$. However if $C_0$ is infinite but $\Gamma_0$ is finite, the vertices $y,z$ in the two-cycle at the end of $\Gamma_0$ satisfy $m(w,y) = m(w,z) = 1$ a.s.~for all $w \in C_0$. Therefore by \eqref{eq: finite_gamma_x_assumption}, 
\[
\mathbb{P}\left( \sum_{w \in \mathbb{Z}^d} m(w,y) = \infty \text{ for some } y \in \mathbb{Z}^d\right) > 0.
\]
By {\bf A1}, this implies that the left side of \eqref{eq: mass_transport} is infinity, a contradiction. We conclude that a.s., if $\Gamma_x$ is finite for some $x$, then $C_x$ is also finite.

\item For $x \in \mathbb{Z}^d$, a.s. if $\Gamma_x$ is infinite, and we write its vertices in order as $x=x_0, x_1, \dots$, then
\[
\omega(\{x_i,x_{i+1}\}) > \omega(\{x_{i+1},x_{i+2}\}) \text{ for all } i \geq 0.
\]
In other words, $\Gamma_x$ is a monotone decreasing path. This follows from \eqref{eq: monotonicity} and item 2: $\Gamma_x$ is an infinite vertex self-avoiding directed path and we can apply \eqref{eq: monotonicity} to each pair of adjacent edges.
\end{enumerate}

Given the three properties above, we continue with the proof of Theorem~\ref{theorem: structure of path in N}. For a contradiction, we assume that 
\begin{equation}\label{eq: for_a_contradiction}
\mathbb{P}(C_0 \text{ is infinite})>0.
\end{equation}
By {\bf A1}, if we show that \eqref{eq: for_a_contradiction} is false, then Theorem~\ref{theorem: structure of path in N} will follow. By item 2 above, we find
\[
\mathbb{P}(C_0 \text{ and } \Gamma_0 \text{ are infinite})>0.
\]
By item 3, if we define for $x \in \mathbb{Z}^d$
\begin{align*}
I_x&= \inf\{\omega(\{u,v\}) : \langle u,v\rangle \text{ is an edge of } \Gamma_x\} \text{ and } \\
S_x&= \sup\{\omega(\{u,v\}) : \langle u,v \rangle \text{ is an edge of } \Gamma_x\},
\end{align*}
then $\mathbb{P}(C_0 \text{ and } \Gamma_0 \text{ are infinite, and } I_0 < S_0) > 0$, and we can therefore find (a deterministic) $r$ such that
\begin{equation}\label{eq: r_def}
\mathbb{P}(C_0 \text{ and } \Gamma_0 \text{ are infinite, and } I_0 < r <  S_0) > 0.
\end{equation}

Following \eqref{eq: r_def}, for $x,y \in \mathbb{Z}^d$, we say that $y$ is the ``$r$-descendant of $x$'' if
\begin{enumerate}
\item $y$ is a vertex of $\Gamma_x$ and, writing $z$ for the a.s.~unique vertex such that $\langle y,z \rangle$ is an edge of $\mathcal{N}_D$, one has $\omega(\{y,z\}) \geq r$, and
\item for any edge $\langle u,v \rangle$ of $\Gamma_z$, one has $\omega(\{u,v\}) < r$.
\end{enumerate}
In other words, $y$ is the last vertex in $\Gamma_x$ whose out-edge has weight $\geq r$. We will invoke the mass transport principle \eqref{eq: mass_transport} using the transport
\[
m(x,y) = \begin{cases}
1 & \quad \text{if } y \text{ is the }r\text{-descendant of } x \\
0 &\quad\text{otherwise},
\end{cases}
\]
noting again that $m(x,y)(\omega) = m(x-z,y-z)(T^z\omega)$ for  $x,y,z \in \mathbb{Z}^d$. Because $\sum_{x \in \mathbb{Z}^d} m(0,x) \leq 1$ a.s., we obtain 
\begin{equation}\label{eq: pizza_head}
\mathbb{E}\sum_x m(x,0) = \mathbb{E}\sum_{x \in \mathbb{Z}^d} m(0,x) \leq 1.
\end{equation}
However a.s. on the event in \eqref{eq: r_def}, there is a vertex $y$ such that $\sum_x m(x,y) = \infty$. Indeed, if $I_0 < r < S_0$ and $\Gamma_0$ is infinite, item 3 gives that the weights along $\Gamma_0$ are decreasing, so $0$ has an $r$-descendant. Furthermore, for each vertex $w$ of $C_0$ (of which there are infinitely many) a.s.~the graph $\Gamma_w$ is also a vertex self-avoiding infinite directed path equal to $\Gamma_0$ with finitely many directed edges appended in sequence to the beginning. All of these edges by item 3 have weight $> r$ and so any such $w$ has the same $r$-descendant as does $0$. Writing $y$ for this $r$-descendant, we obtain
\[
\text{for an outcome as above, } \sum_x m(x,y) \geq \sum_{w \text{ a vertex of } C_0} m(w,y) = \infty.
\]
Therefore from \eqref{eq: r_def} we deduce that
\[
\mathbb{P}\left( \text{for some }y \in \mathbb{Z}^d,~\sum_x m(x,y) = \infty\right) > 0.
\]
By {\bf A1}, we obtain $\mathbb{E}\sum_x m(x,0) = \infty$ and this contradicts \eqref{eq: pizza_head}. We find then that \eqref{eq: r_def} must have been false, and therefore so was \eqref{eq: for_a_contradiction}. This completes the proof of Theorem~\ref{theorem: structure of path in N}.

\subsection{Proof of Theorem~\ref{thm: two_components}}\label{sec: dimension_two_proof}
The proof will be split over two subsections. In Section~\ref{sec: topology}, we derive some basic results about vertex sets and their boundaries. In Section~\ref{sec: topology_2}, we analyze the structure of infinite components in $\mathcal{N}$ and give the proof of Theorem~\ref{thm: two_components}.

To do this, we begin with some simple definitions. If $x,y \in \mathbb{Z}^2$, then $x$ and $y$ are site-neighbors if $\|x-y\|_1 = 1$ (this is just a redefinition of ``nearest-neighbors'' in $\mathbb{Z}^2$, made to distinguish from neighbors in $\mathcal{N}$). A set $V \subset \mathbb{Z}^2$ of vertices is site-connected if for each $x,y \in V$, there is a path (here a sequence of vertices $x = x_0, x_1, \dots, x_n = y$ such that $x_i$ and $x_{i+1}$ are site-neighbors for all $i$) from $x$ to $y$ remaining in $V$. A site-component of $V$ is a maximal site-connected subset of $V$. As usual, in addition to the (primal) lattice $\left(\mathbb{Z}^2, \mathcal{E}^2\right)$ with edge set $\mathcal{E}^2$ consisting of those edges between neighboring vertices, we use the dual lattice $\left( \left( \mathbb{Z}^2\right)^*, \left( \mathcal{E}^2\right)^*\right)$, with vertex set
\[
\left( \mathbb{Z}^2\right)^* = \mathbb{Z}^2 + \left( \frac{1}{2}, \frac{1}{2}\right)
\]
and edge set $\left( \mathcal{E}^2\right)^*$ consisting of edges between neighbors. Each dual edge $e^*$ bisects a unique edge $e$.

\subsubsection{Basic topological properties of vertex sets}\label{sec: topology}
In this section, we derive some simple properties of vertex sets. These will be used in the main proof in the following section.

\begin{definition}
Let $V \subset \mathbb{Z}^2$.
\begin{enumerate}
\item The closure of $V$, written $\overline{V}$, is the union of $V$ with all finite site-components of $V^c$.
\item The dual edge boundary of $V$, written $B(V)$, is the subgraph of $\left( \left( \mathbb{Z}^2\right)^*, \left( \mathcal{E}^2\right)^* \right)$ induced by the set of dual edges whose unique bisecting edge $\{x,y\}$ has $x \in \overline{V}$ and $y \notin \overline{V}$.
\end{enumerate}
\end{definition}

We note some simple properties of the definitions. The set $\overline{V}$ is intended to be $V$ ``with its holes filled in,'' so its complement should only have infinite site-components:
\begin{lemma}\label{lem: infinite_complement_components}
Let $V \subset \mathbb{Z}^2$. Then $\left( \overline{V}\right)^c$ has only infinite site-components.
\end{lemma}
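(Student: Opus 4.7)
The plan is to unwind the definition of $\overline{V}$ directly: show that $(\overline{V})^c$ is exactly the union of the infinite site-components of $V^c$, and then check that the site-components of this union (viewed as a subset of $\mathbb{Z}^2$) are precisely those infinite site-components. There is no real probabilistic or topological depth here; the whole statement is essentially tautological once one parses the definition carefully.

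First I would decompose $V^c$ into its site-components, separating them into the finite ones $\{F_i\}_{i \in \mathcal{I}_f}$ and the infinite ones $\{J_k\}_{k \in \mathcal{I}_\infty}$; together these partition $V^c$. By the definition of closure,
\[
\overline{V} = V \cup \bigcup_{i \in \mathcal{I}_f} F_i,
\]
so taking complements inside $\mathbb{Z}^2$ gives
\[
(\overline{V})^c = V^c \setminus \bigcup_{i \in \mathcal{I}_f} F_i = \bigcup_{k \in \mathcal{I}_\infty} J_k.
\]

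Next I would argue that each $J_k$ is a site-component of $(\overline{V})^c$. Containment $J_k \subseteq (\overline{V})^c$ and site-connectedness of $J_k$ are immediate from the above. For maximality, suppose $x \in (\overline{V})^c$ is site-connected to some point $y \in J_k$ by a path $\gamma$ inside $(\overline{V})^c$. Since $(\overline{V})^c \subseteq V^c$, the path $\gamma$ lies in $V^c$, so $x$ belongs to the same site-component of $V^c$ as $y$, namely $J_k$. Thus the site-components of $(\overline{V})^c$ are exactly the $J_k$'s, all of which are infinite by construction.

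The main obstacle is simply bookkeeping: being careful that ``site-connected inside $(\overline{V})^c$'' and ``site-connected inside $V^c$'' yield the same components on the subset $\bigcup_k J_k$, which works because a path in the smaller set is automatically a path in the larger set, and conversely a site-connected subset of $(\overline{V})^c$ cannot cross into any $F_i$ (those were removed).
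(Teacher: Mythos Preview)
Your proof is correct. The key identity $(\overline{V})^c = \bigcup_{k \in \mathcal{I}_\infty} J_k$ follows immediately from the definition, and your maximality argument is sound: any path in $(\overline{V})^c$ is automatically a path in $V^c$, so it cannot connect distinct $J_k$'s.

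Your route is genuinely different from the paper's. The paper argues by contradiction: it supposes $(\overline{V})^c$ has a finite site-component containing some vertex $x$, observes that $x$ must lie in an infinite site-component of $V^c$, and then follows an infinite self-avoiding path $\pi$ in $V^c$ starting from $x$. Since $\pi$ must eventually exit the assumed finite component of $(\overline{V})^c$, it hits some $q \in \overline{V} \cap V^c$, forcing $q$ into a finite site-component of $V^c$; but then all of $\pi$ lies in that finite component, a contradiction. Your argument is more direct and structural: you simply identify $(\overline{V})^c$ set-theoretically and read off its component decomposition. This avoids the path-following altogether and makes the lemma transparently tautological, which is a cleaner presentation. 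The paper's argument, by contrast, rehearses the kind of path-based reasoning used repeatedly later in Section~\ref{sec: topology_2}, so it fits the surrounding style even if it is less economical here.
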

\begin{proof}
Assume for a contradiction that $\left(\overline{V}\right)^c$ had a finite site-component containing a vertex $x$. Since $\left(\overline{V}\right)^c \subset V^c$ and the finite site-components of $V^c$ are in $\overline{V}$, $x$ must be in an infinite site-component of $V^c$. Then pick a vertex self-avoiding path $\pi$, starting from $x$ and remaining in $V^c$, which has infinitely many vertices. This path $\pi$ must leave the site-component of $\left(\overline{V}\right)^c$ containing $x$, so it contains a vertex $q \in \overline{V}$. But $q \in \pi \subset V^c$, so $q$ must be in $\overline{V} \setminus V$ and therefore is in a finite site-component $\tilde{C}_q$ of $V^c$. However then $\pi \subset \tilde{C}_q$, giving a contradiction since $\pi$ is infinite.
\end{proof}

Next we show that the vertices on the site-boundary of $\overline{V}$ are actually in $V$. (Otherwise, they would be in finite holes in the complement of $V$.) As a consequence, if $V$ is site-connected, so is $\overline{V}$.
\begin{lemma}\label{lem: neighbor_hole}
If $x \in \overline{V}$ has a neighbor in $\left(\overline{V}\right)^c$, then $x \in V$.
\end{lemma}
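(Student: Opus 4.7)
The plan is to argue by contradiction, leveraging only the definition of $\overline{V}$ and elementary facts about site-components of $V^c$. Suppose that $x \in \overline{V}$ has some site-neighbor $y \in (\overline{V})^c$, yet $x \notin V$. By the definition of $\overline{V}$, since $x \in \overline{V} \setminus V$, the vertex $x$ must lie in some \emph{finite} site-component $F$ of $V^c$.

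Next I would analyze where $y$ sits. Since $V \subset \overline{V}$, the condition $y \in (\overline{V})^c$ implies in particular $y \in V^c$. Being a site-neighbor of $x \in V^c$, the vertex $y$ then belongs to the same site-component of $V^c$ as $x$, namely $F$. But $F$ is finite, so by the definition of $\overline{V}$ we have $F \subset \overline{V}$, giving $y \in \overline{V}$. This contradicts $y \in (\overline{V})^c$, so the assumption $x \notin V$ was false.

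The argument is essentially a definition-chase, so there is no serious obstacle; the only care needed is in correctly handling the three possible locations of $y$ (in $V$, in a finite site-component of $V^c$, or in an infinite site-component of $V^c$) and observing that $y \in (\overline{V})^c$ rules out the first two. Written out, the body of the proof would be short enough to state without additional lemmas, relying only on the definition of closure and on the fact that a site-neighbor of $x \in V^c$ lying in $V^c$ must be in the same site-component of $V^c$ as $x$.
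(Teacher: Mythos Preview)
Your proof is correct and is essentially the same argument as the paper's: both proceed by contradiction, using that site-neighbors in $V^c$ lie in the same site-component of $V^c$, so one cannot be in a finite component (hence in $\overline{V}$) while the other is in an infinite component (hence in $(\overline{V})^c$). The only cosmetic difference is that the paper starts from $y$ (placing it in an infinite component and dragging $x$ there) while you start from $x$ (placing it in a finite component and dragging $y$ there).
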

\begin{proof}
The neighbor $y$ of $x$ that is in $\left(\overline{V}\right)^c$ is not in $V$, and is therefore in an infinite site-component $\tilde{C}$ of $V^c$ (otherwise it would be in $\overline{V}$). If $x$ were not in $V$, then it would be in $\tilde{C}$, as it is adjacent to an element of this site-component. But this means $x$ would be in an infinite site-component of $V^c$ and therefore would not be in $\overline{V}$, a contradiction.
\end{proof}

\begin{lemma}\label{lem: degree_two}
Let $V \subset \mathbb{Z}^2$ be site-connected. Then each vertex in the graph $B(V)$ has degree two. Therefore $B(V)$ is a vertex-disjoint union of vertex self-avoiding circuits and vertex self-avoiding doubly-infinite paths. If $V$ is also infinite, then $B(V)$ contains no vertex self-avoiding circuits.
\end{lemma}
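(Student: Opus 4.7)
The plan is to fix a dual vertex $v^*$ and set up coordinates so that the four surrounding primal vertices are $A, B, C, D$ in cyclic order, with the four primal edges $\{A,B\}, \{B,C\}, \{C,D\}, \{D,A\}$ bisected by the four dual edges incident to $v^*$. The degree of $v^*$ in $B(V)$ equals the number of these four primal edges whose endpoints lie in different classes ($\overline{V}$ vs.~$(\overline{V})^c$). Labeling each of $A, B, C, D$ by a sign for its class, this count is the number of sign changes in the cyclic sequence $A, B, C, D, A$, which is always even, so the degree is $0$, $2$, or $4$. Degree $0$ means $v^*$ is not a vertex of $B(V)$, so it only remains to rule out the degree-$4$ case.

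The degree-$4$ case is the checkerboard configuration; relabeling if needed, $A, C \in \overline{V}$ and $B, D \in (\overline{V})^c$. By Lemma~\ref{lem: neighbor_hole}, both $A$ and $C$ actually lie in $V$ (each has a neighbor outside $\overline{V}$). Since $V$ is site-connected, I would pick a primal site-path $\pi \subset V$ from $A$ to $C$ and form the closed curve $\gamma$ in $\mathbb{R}^2$ by concatenating $\pi$ with the two diagonal half-segments from $A$ to $v^*$ and from $v^*$ to $C$. By the Jordan curve theorem, $\mathbb{R}^2 \setminus \gamma$ has a bounded and an unbounded component, and near $v^*$ the curve separates $B$ from $D$ locally, hence globally; say $B$ lies in the bounded component. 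By Lemma~\ref{lem: infinite_complement_components}, $B$ belongs to an infinite site-component of $(\overline{V})^c$, so there is an infinite self-avoiding primal site-path starting at $B$ and contained in $(\overline{V})^c$. This path must cross $\gamma$, but it cannot cross $\pi$ because the vertex sets $V$ and $(\overline{V})^c$ are disjoint and two primal edges in $\mathbb{Z}^2$ meet only at common vertices; and it cannot cross the interior of the two half-diagonals because no horizontal or vertical unit primal segment passes through those interiors. This contradiction rules out degree $4$, so every vertex of $B(V)$ has degree exactly $2$, and the standard classification of $2$-regular graphs gives the decomposition into vertex self-avoiding circuits and doubly-infinite paths.

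For the final claim, I would first observe that site-connectedness of $V$ implies site-connectedness of $\overline{V}$: any finite site-component $C$ of $V^c$ has every site-neighbor of $C$ outside $C$ lying in $V$ (otherwise $C$ could be enlarged within $V^c$), so the vertices of $C$ are site-connected to $V$ through this boundary. Now assume $V$ is infinite and, for contradiction, $B(V)$ contains a vertex self-avoiding circuit $\gamma$. By the Jordan curve theorem, $\gamma$ bounds a finite region containing at least one primal vertex $p$. Any primal edge with both endpoints in the same class ($\overline{V}$ or $(\overline{V})^c$) is not bisected by any edge of $B(V)$ and therefore does not cross $\gamma$, so the site-component of $p$ inside its own class is trapped in the finite bounded region. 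If $p \in (\overline{V})^c$, this contradicts Lemma~\ref{lem: infinite_complement_components}; if $p \in \overline{V}$, the site-component of $p$ in $\overline{V}$ would be finite, contradicting site-connectedness of the infinite set $\overline{V}$. Either way no such circuit exists.

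The main obstacle is the checkerboard rule-out: threading together Lemmas~\ref{lem: neighbor_hole} and~\ref{lem: infinite_complement_components} with a planar Jordan-curve argument is the step where site-connectedness of $V$ is used essentially, and it is where one must be careful that the primal infinite path genuinely cannot exit the bounded region.
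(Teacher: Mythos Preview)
Your proof is correct and follows essentially the same approach as the paper's: both rule out degree $4$ by connecting the two $\overline{V}$-vertices through $V$ and closing up with the diagonal through the dual vertex to form a Jordan curve, then trap one of the $(\overline{V})^c$-vertices; and both rule out circuits by trapping an infinite site-component inside the bounded Jordan region. The only notable differences are cosmetic: you invoke Lemma~\ref{lem: infinite_complement_components} to produce the escaping path in the degree-$4$ step (the paper argues directly that the trapped vertex lies in a finite site-component of $V^c$), and for the circuit step you pick an arbitrary interior vertex and split into cases, explicitly establishing that $\overline{V}$ is site-connected, whereas the paper picks a specific edge bisected by the circuit and works with $V$ itself.
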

\begin{proof}
Because $B(V)$ is induced by a set of edges, each vertex has at least degree one. Because edges of $B(V)$ separate $\overline{V}$ from $\left( \overline{V}\right)^c$, its vertices must have degree 2 or 4, so we rule out degree 4. Assume for a contradiction that some dual vertex $x$ has degree 4 in $B(V)$. Then we can enumerate the (primal) vertices at distance $\sqrt{2}/2$ of $x$ by $x_1, \dots, x_4$ in clockwise order so that $x_i \in \overline{V}$ if and only if $i$ is odd. Because $x_1$ and $x_3$ are site-neighbors of $\left(\overline{V}\right)^c$, Lemma~\ref{lem: neighbor_hole} implies that they are in $V$. We can then choose a vertex self-avoiding path $\pi$ from $x_1$ to $x_3$ which remains in $V$, and then a plane curve $P$ which starts at $x_1$ and proceeds as follows. First, $P$ connects the vertices of $\pi$ in order by straight line segments. At $x_3$, $P$ connects to $x_1$ by a diagonal line segment (going through $x$). $P$ is a Jordan curve, and therefore its complement (in $\mathbb{R}^2$) has two components: one bounded (its interior) and one unbounded (its exterior). Proceeding in a straight line from $x_2$ to $x_4$, we cross $P$ exactly once. Since neither $x_2$ nor $x_4$ is on $P$, one must be in each component. By symmetry, let's say that $x_2$ is in the interior. Then $x_2 \in V^c$ and must be in a bounded site-component of $V^c$, since any infinite vertex self-avoiding path starting at $x_2$ must leave the interior of $P$ and therefore touch $\pi$ (it cannot touch the interior of the other segment composing $P$). This is a contradiction, since $x_2 \in \left( \overline{V}\right)^c$, and $\overline{V}$ contains the bounded site-components of $V^c$. We conclude that $x$ must have degree 2 in $B(V)$.

Now suppose that $V$ is infinite and, for a contradiction, assume that $B(V)$ contains a vertex self-avoiding circuit. Again, form a Jordan curve $P$ by proceeding along the circuit, using straight line segments to connect its vertices. Let $x \in V$ and $y \in \left( \overline{V}\right)^c$ be such that the edge $\{x,y\}$ bisects a dual edge in $P$. Then $x$ and $y$ are in different components of the complement of $P$ (the edge $\{x,y\}$ crosses $P$ exactly once), so one of them is in the bounded component. First suppose that it is $x$; then because $V$ is infinite and connected, there is an infinite vertex self-avoiding path $\pi_x$ started at $x$ which remains in $V$. But $\pi_x$ must then exit the bounded component of the complement of $P$ and cross $P$ to a vertex of $\left( \overline{V}\right)^c$, which is a contradiction. If instead $y$ is in the bounded component, then by Lemma~\ref{lem: infinite_complement_components}, we can choose an infinite vertex self-avoiding path $\pi_y$ started at $y$ which remains in $\left( \overline{V}\right)^c$. By the same reasoning, we obtain another contradiction. Therefore $B(V)$ contains no vertex self-avoiding circuit. 
\end{proof}

\subsubsection{Topology of nearest neighbor components}\label{sec: topology_2}

%
%
%

Our aim is to show that for $d=2$, if the number of infinite components of $\mathcal{N}$ is $\geq 2$, then it must be 2. Essentially we will show that in this case, the two components are both topologically half-planes which can come within distance 1 of each other, and they may be separated by finite components of $\mathcal{N}$.  The first step is to show that if $C$ is an infinite component of $\mathcal{N}$, then the closure of its vertex set is topologically either a full-plane or half-plane. We do this in the following proposition.
\begin{proposition}\label{prop: double}
For any component $C$ of $\mathcal{N}$, write $V(C)$ for its vertex set and $B(C)$ for $B(V(C))$. Then under assumption {\bf A}, a.s., for each infinite component $C$ of $\mathcal{N}$, $B(C)$ is either empty or is a vertex self-avoiding doubly-infinite path.
\end{proposition}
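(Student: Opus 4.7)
The plan is to reduce the proposition to a uniqueness claim via Lemma~\ref{lem: degree_two}, and then combine a planar topological observation with the mass-transport argument used for Theorem~\ref{theorem: structure of path in N} to derive a contradiction from the existence of two boundary components. First I would note that $V(C)$ is site-connected---because the edges of $\mathcal{N}$ are edges of $(\mathbb{Z}^2,\mathcal{E}^2)$ and $C$ is $\mathcal{N}$-connected---and infinite, so Lemma~\ref{lem: degree_two} gives $B(C)$ as a vertex-disjoint union of vertex self-avoiding doubly-infinite paths. It then suffices to show this union has at most one element.

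Suppose for contradiction there are two disjoint doubly-infinite path components $P_1,P_2$ in $B(C)$. On the Riemann sphere $P_1\cup P_2$ becomes two simple closed curves meeting only at $\infty$, so $\mathbb{R}^2\setminus(P_1\cup P_2)$ has three connected components. Using that $\overline{V(C)}$ is site-connected (via Lemmas~\ref{lem: infinite_complement_components} and \ref{lem: neighbor_hole}) and that each $P_i$ separates $\overline{V(C)}$ from $\overline{V(C)}^c$, the only configuration consistent with $B(\overline{V(C)})\supseteq P_1\cup P_2$ has $\overline{V(C)}$ occupying the middle region---topologically a strip. Consequently, the subgraph of $(\mathbb{Z}^2,\mathcal{E}^2)$ induced on $V(C)$ has at least two graph-theoretic ends, and the sparser $\mathcal{N}$-subgraph on $V(C)$ (which has fewer edges and hence at least as many ends) inherits this property.

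I can now apply the directed structure. Pick a finite $F\subset V(C)$ whose removal from the $\mathcal{N}$-subgraph on $V(C)$ leaves at least two infinite $\mathcal{N}$-components $A_1,A_2$. From property~2 in Section~\ref{sec: second_proof} combined with Theorem~\ref{theorem: structure of path in N}, for each $x\in V(C)$ the path $\Gamma_x$ is infinite and vertex self-avoiding. Moreover, because each vertex has out-degree one, any two $\mathcal{N}$-adjacent vertices $x,y$ have $\Gamma_x,\Gamma_y$ sharing a common tail (whichever way the edge is oriented, one of the paths is a suffix of the other). Iterating along any $\mathcal{N}$-path within $C$ yields that all $\Gamma_x$ with $x\in V(C)$ share a single infinite tail $\Gamma^*$, which must eventually be contained in a single $A_i$, say $A_1$. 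For any $x\in A_2$, then, $\Gamma_x$ starts in $A_2$ but must cross $F$ before reaching $A_1$; the map sending each such $x$ to the first vertex of $F$ along $\Gamma_x$ has infinite domain and finite range, so by the pigeonhole principle infinitely many $x\in A_2$ route through a common $v^*\in F$. Each such $x$ satisfies $x\to v^*$ in $\mathcal{N}_D$, i.e.\ $x\in C_{v^*}$, so $|C_{v^*}|=\infty$, contradicting Theorem~\ref{theorem: structure of path in N}.

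The step I expect to require the most care is the planar topological one, where the discrete statement ``$B(C)$ has two disjoint doubly-infinite dual paths'' has to be translated into the graph-theoretic statement ``the lattice subgraph on $V(C)$ has at least two ends''. Making this rigorous requires working with $\overline{V(C)}$ both as a 2-manifold-with-boundary in $\mathbb{R}^2$ and as a lattice subgraph, and using the lemmas from Section~\ref{sec: topology} to bridge the two viewpoints.
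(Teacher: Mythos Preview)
Your approach is correct, and the downstream portion (the common tail $\Gamma^*$, the separating set $F$, the pigeonhole step) is a clean alternative to the paper's endgame. The end-monotonicity step is also fine: if $G' \subseteq G$ are connected graphs on the same vertex set and $G\setminus F$ has $k$ infinite components, each must contain an infinite $G'$-component (since every $G'$-component of $G\setminus F$ is $G'$-adjacent to $F$ and there are only finitely many such neighbors), so $G'$ has at least as many ends.

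The topological step you flag, however, needs more than you have indicated. The claim that the middle region of $\mathbb{R}^2\setminus(P_1\cup P_2)$ is ``topologically a strip'' is loose: if $P_1$ is a straight dual line and $P_2$ is a hairpin surrounding a vertical half-line that protrudes into the half-plane cut off by $P_1$, then the middle region is a slit half-plane, not a strip in the naive sense. What you actually need is the weaker assertion that $\overline{V(C)}$ has at least two graph-theoretic ends. This is true, but making it rigorous amounts to showing that for large $n$, every path in $\overline{V(C)}$ from $x_n$ to $x_{-n}$ (vertices of $V(C)$ lying along $P_1$ in opposite directions) must pass through a fixed finite path $\sigma$ joining a vertex near $P_1$ to one near $P_2$; and that is exactly the Jordan curve argument the paper carries out, with $\pi_3$ playing the role of $\sigma$ and the paper's $P_n$ playing the role of your $x_n$--$x_{-n}$ path. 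The Riemann-sphere heuristic does not bypass this: two Jordan curves through $\infty$ meeting only there can sit in several inequivalent configurations, and extracting ``two ends'' from the picture requires the same planar separation argument.

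So in the end your proof and the paper's share the same topological core; the difference is packaging. The paper works directly with undirected paths $P_n$ and $\pi_3$, proves $P_n\cap\pi_3\neq\varnothing$ via Jordan curves, and then recovers the orientation $x_{\pm n}\to z$ at the end. You instead encapsulate the topology as ``$C$ has two ends'' and use the common-tail property of the $\Gamma_x$ to route infinitely many forward paths through a finite $F$. Your version is arguably more conceptual, but the work saved downstream is offset by the work needed to justify the two-ends claim.
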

\begin{proof}
Assume for a contradiction that
\begin{equation}\label{eq: for_a_contradiction_double}
\mathbb{P}\left( \begin{array}{c}
B(C) \text{ contains at least two distinct vertex self-avoiding} \\
\text{ doubly-infinite paths for some infinite component }C \text{ of } \mathcal{N}
\end{array}
\right) > 0.
\end{equation}
We will show that for almost every outcome in this event, there is a vertex $x$ with $\#C_x = \infty$. This will be a contradiction, as we have shown in Theorem~\ref{theorem: structure of path in N} that this has zero probability.

By Lemma~\ref{lem: degree_two}, the two doubly-infinite paths in \eqref{eq: for_a_contradiction_double} can be assumed to be vertex disjoint. So, for such an outcome, let $C$ be an infinite component of $\mathcal{N}$ and let $\pi_1$ and $\pi_2$ be vertex disjoint, vertex self-avoiding doubly-infinite paths contained in $B(C)$. Choose a dual edge $e_0$ in $\pi_1$ and enumerate the edges in either direction along $\pi_1$ as $\dots, e_{-1},e_0,e_1, \dots$. For $n \in \mathbb{Z}$, write $\{x_n,x_n'\}$ for the edge which bisects $e_n$, so that $x_n \in V(C)$ and $x_n' \in \left( \overline{V(C)}\right)^c$. For $n \geq 1$, since $x_n,x_{-n}$ are in $V(C)$, there is a vertex self-avoiding path $P_n$ in $\mathcal{N}$ connecting $x_n$ to $x_{-n}$. Fixing any $y \in V(C)$ such that some edge $\{y,y'\}$ bisects an edge of $\pi_2$, we can also find a path $\pi_3$ in $\mathcal{N}$ connecting $x_0$ to $y$. See Figure~\ref{fig: two_components_1}.

\begin{figure}
\hbox{\hspace{4cm}\includegraphics[width=.45\textwidth, trim={5cm 8cm 16cm 4cm}, clip]{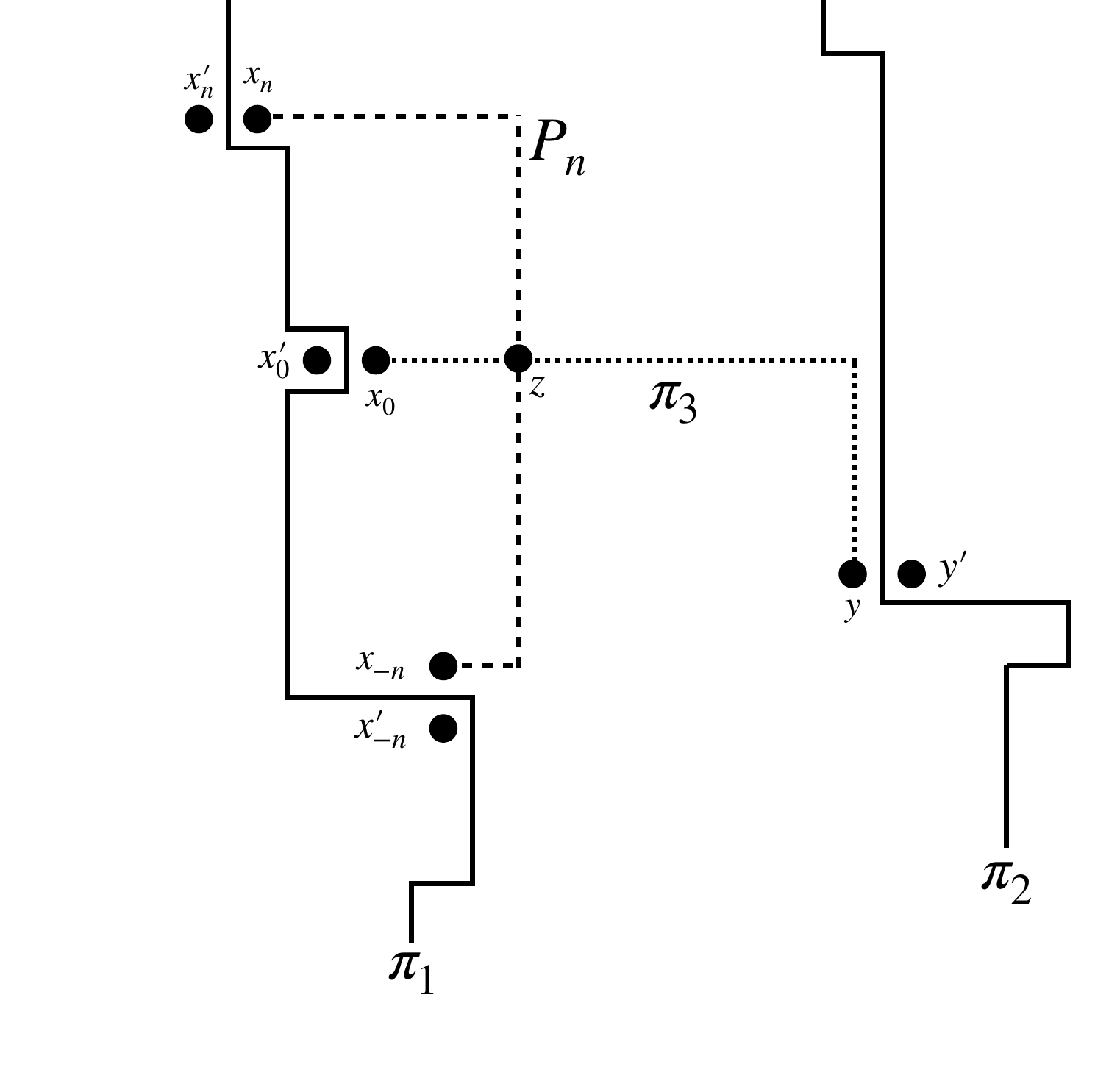}}
  \caption{Illustration of the argument for Proposition~\ref{prop: double}. The closure $\overline{V(C)}$ of the component $C$ lies between the two paths $\pi_1,\pi_2$, which are part of the boundary $B(C)$. The $x_i$'s are the endpoints of edges dual to those of $\pi_1$ that lie in $V(C)$, and $P_n$ is a path in $\mathcal{N}$ connecting $x_n$ to $x_{-n}$. The path $\pi_3$ is in $\mathcal{N}$ and connects $x_0$ to $y$, an endpoint of an edge dual to one in $\pi_2$. By the Jordan curve theorem, $P_n$ must intersect $\pi_3$ at some vertex $z$. Because this is true for all $n$, one can argue that some such $z$ has $\#C_z =\infty$.}
  \label{fig: two_components_1}
\end{figure}

We will now argue by the Jordan curve theorem that $P_n$ must intersect $\pi_3$. To do this, define the plane curve $\mathcal{P}_n$ as follows. It proceeds from $x_n$ to $x_{-n}$ along the edges of $\mathcal{N}$ that connect vertices of $P_n$, then it connects $x_{-n}$ to $\pi_1$ halfway through the edge $\{x_{-n},x'_{-n}\}$, then it proceeds along $\pi_1$ until it meets the edge $\{x_n,x'_n\}$, and last moves to $x_n$ halfway through this edge. The curve $\mathcal{P}_n$ is a Jordan curve, and so its complement in $\mathbb{R}^2$ has a bounded component (the interior) and an unbounded component (the exterior). We first note that 
\begin{equation}\label{eq: exterior_P_n}
\text{ for } n \geq 1, \left( \overline{V(C)}\right)^c \subset \text{ext}~\mathcal{P}_n.
\end{equation}
Indeed, if $u \in \left( \overline{V(C)}\right)^c$, then (by Lemma~\ref{lem: infinite_complement_components}) $u$ is in an unbounded site-component of $\left( \overline{V(C)}\right)^c$, and so there is an infinite vertex self-avoiding path $\pi$ starting from $u$ and remaining in this component. However $\pi$ cannot touch $\mathcal{P}_n$ because it never leaves this component and therefore never comes in contact with $P_n$ or $\pi_1$. Therefore $u$ must be in the unbounded component of the complement of $\mathcal{P}_n$, and this shows \eqref{eq: exterior_P_n}.

Due to \eqref{eq: exterior_P_n}, we can now argue that $x_0$ is either on $\mathcal{P}_n$ or in its interior. So suppose that $x_0$ is not on $\mathcal{P}_n$; this means it is not on $P_n$. Then the edge $\{x_0,x_0'\}$ only touches $\mathcal{P}_n$ at $\pi_1$, and it only touches it once. However $x_0'$ is in the exterior of $\mathcal{P}_n$, so $x_0$ must be in the interior. By similar reasoning, we can argue that $y$ (the other endpoint of $\pi_3$) is either on $\mathcal{P}_n$ or in its exterior. Indeed, if it is not on $\mathcal{P}_n$, then it is not on $P_n$, and then the edge $\{y,y'\}$ does not touch $\mathcal{P}_n$ but ends at the vertex $y'$, which is in the exterior. Therefore $y$ is also in the exterior. Because $x_0$ is not in the exterior of $\mathcal{P}_n$, and $y$ is not in the interior, the curve formed by following straight line segments between the vertices of $\pi_3$ must touch $\mathcal{P}_n$. Since $\pi_3$ doesn't leave $C$, it must touch a vertex of $P_n$. Therefore we find that for all $n$, $P_n$ shares a vertex with $\pi_3$, as desired. 

Because $\pi_3$ is finite, there is a $z \in \pi_3$ that is in $P_{n_k}$ for some subsequence $(n_k)$ of integers with $n_k \to \infty$. We claim that either $x_{n_k} \to z$ in $\mathcal{N}_D$ or $x_{-n_k} \to z$ in $\mathcal{N}_D$. This is clear if one of $x_{n_k}$ or $x_{-n_k}$ equals $z$. Otherwise, $z$ has degree two on $P_{n_k}$. Since a.s., all vertices have out-degree one in $\mathcal{N}_D$ (and therefore so does $z$), one of its neighbors on $P_{n_k}$, say $w_0$, is such that $\langle w_0,z\rangle$ is an edge of $\mathcal{N}_D$. Enumerating the subsequent vertices (beyond $w_0$) of $P_{n_k}$ as $w_1, \dots, w_r$, each $w_i$ has out-degree one in $\mathcal{N}_D$, so $\langle w_{i+1}, w_i\rangle$ is an edge of $\mathcal{N}_D$. Thus $w_r \to z$ in $\mathcal{N}_D$. Since $w_r = x_{n_k}$ or $x_{-n_k}$, this proves the claim and furthermore establishes that \eqref{eq: for_a_contradiction_double} implies
\[
\mathbb{P}\left( \exists z \in \mathbb{Z}^2 \text{ such that } \#C_z = \infty\right) > 0.
\]
This is a contradiction and completes the proof.
\end{proof}

Now that each infinite component of $\mathcal{N}$ is topologically a half-space, we must show that the complement cannot contain more than one other infinite component. This could happen, for example, if three infinite components were separated by an infinite union of finite components with at least three topological ends, or if three infinite components come within distance 1 of each other. To rule out these and other possibilities, we will make use of the results of Burton-Keane in \cite{BK2}. To begin, we make a few definitions, and state a structural lemma which follows from arguments of \cite{BK2}.
\begin{definition}
Write $X$ for the subset of $\mathbb{Z}^2$ defined by
\[
X = \mathbb{Z}^2 \setminus \bigcup_{C} \overline{V(C)},
\]
where the union is over all infinite components of $\mathcal{N}$. Then $\mathbb{Z}^2$ is the disjoint union of sets of the following three types:
\begin{enumerate}
\item[(a)] $\overline{V(C)}$ for an infinite component $C$ of $\mathcal{N}$,
\item[(b)] an infinite site-component of $X$, and
\item[(c)] a finite site-component of $X$.
\end{enumerate}
\end{definition}
Note that because the complement of $X$ contains only infinite site-components, each set of type (b) or (c) is equal to its closure. For the next lemma, we say that vertices $x$ and $y$ are $\ast$-neighbors (or $\ast$-adjacent) if $\|x-y\|_\infty = 1$. We extend this notion to sets in the usual way.

\begin{lemma}\label{lem: BK2}
Under assumption {\bf A}, a.s., none of the following occur.
\begin{enumerate}
\item There is a set of type (b) whose complement has at least 3 site-components.
\item There is a dual vertex within Euclidean distance $\sqrt{2}/2$ of three different sets of types (a) or (b).
\item There is a set of type (c) which is $\ast$-adjacent to at least three different sets of types (a) or (b).
\end{enumerate}
\end{lemma}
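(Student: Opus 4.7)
The approach is to apply the Burton-Keane trifurcation argument~\cite{BK2} to the translation-invariantly distributed partition of $\mathbb{Z}^2$ into sets of types (a), (b), and (c). For each of items 1--3, we associate to a witness a \emph{trifurcation}: a finite region $R \subset \mathbb{Z}^2$ from which three pairwise vertex-disjoint infinite self-avoiding paths in $\mathbb{Z}^2$ emanate, each eventually contained in a distinct infinite set of type (a) or (b).

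For item~2, $R$ is the set of four primal vertices adjacent to the dual vertex; by hypothesis these vertices meet at least three distinct infinite sets of types (a) or (b), and since each such set is infinite and site-connected in $\mathbb{Z}^2$, an infinite self-avoiding path can be extracted from each, starting near $R$. For item~3, $R$ is the finite type-(c) set $F$, $\ast$-adjacent to at least three distinct infinite clusters, and the same extraction applies. For item~1, $R$ is a translation-covariantly chosen finite core of the infinite site-component $U$ of $X$; using Lemmas~\ref{lem: infinite_complement_components} and~\ref{lem: degree_two} together with the disjointness of type-(a) sets established in Section~\ref{sec: topology}, one shows that the condition ``$U^c$ has at least three site-components'' forces three infinite clusters of types (a) or (b) to be accessible from $U$, supplying three disjoint infinite branches.

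Given this trifurcation structure, the Burton-Keane counting argument applies. Assume for contradiction that some witness event has positive probability. Using a translation-covariant rule, assign each witness to a unique reference point in $\mathbb{Z}^2$; by~{\bf A1}, the expected number of witness reference points in $B_n = [-n,n]^2 \cap \mathbb{Z}^2$ is $\Theta(n^2)$. On the other hand, the planar Jordan-curve argument of \cite{BK2} shows that the number of trifurcations in $B_n$ is at most $O(|\partial B_n|) = O(n)$, since each can be injectively associated to a point (or a small bounded set of points) on $\partial B_n$ via the first exits of its three disjoint branches. The inequality $\Theta(n^2) = O(n)$ fails for large $n$, a contradiction.

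The main obstacle is item~1: one must verify that the $\geq 3$ site-components of $U^c$ correspond to three distinct infinite clusters of types (a) or (b) rather than a collection of finite type-(c) ``holes.'' This relies on the observation that a finite site-component of $U^c$ must consist of a single type-(c) set (since type-(a) and type-(b) sets are infinite), combined with the fact that three such finite holes in $U$ without any accompanying infinite clusters would be inconsistent with $U$ being a proper site-component of $X$. Items~2 and~3 are more immediate. Once the three-branch structure is established in each case, the remainder is the standard Burton-Keane planar counting.
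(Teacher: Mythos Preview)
Your treatment of items~2 and~3 matches the paper's: both invoke the Burton--Keane central-point counting from \cite{BK2}, associating to each witness three disjoint infinite paths in distinct type-(a) or type-(b) sets and deriving the $\Theta(n^2)$ versus $O(n)$ contradiction.

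For item~1, however, there is a genuine gap. The phrase ``translation-covariantly chosen finite core of the infinite site-component $U$'' is not well-defined: $U$ is infinite, and there is no translation-covariant rule that selects a single bounded reference region from an arbitrary infinite connected set. Without such a rule you cannot run the density half of the Burton--Keane argument, since the $\Theta(n^2)$ lower bound on witnesses in $B_n$ requires each witness to contribute a bounded reference region, and an infinite $U$ does not supply one. Even if you instead sought individual vertices $v\in U$ at which the three complementary components are locally separated by a bounded ball, you would need a uniform bound on the radius of that ball, which is not available a priori.

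The paper sidesteps this entirely. Item~1 is obtained by a direct citation of \cite[Thm.~2]{BK2}, applied to the stationary site percolation $x_v = \mathbf{1}_{\{v\in X\}}$: in the language of that paper, a type-(b) set is exactly a ``$1$-ribbon'' (the closure of an infinite $1$-cluster), and the cited theorem asserts precisely that no ribbon has complement with three or more site-components. No trifurcation-point construction on your end is required. Incidentally, your concern about finite ``holes'' in $U^c$ is moot: \emph{every} site-component of $U^c$ is automatically infinite, since a finite one would be a single type-(c) set site-surrounded by $U$, hence site-connected to $U$ inside $X$, contradicting that it is a distinct site-component of $X$. But this observation, while correct, does not by itself furnish the bounded reference region that the counting argument needs.
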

 \begin{proof}
Item 1 is a direct application of \cite[Thm.~2]{BK2}, which states that in stationary site percolation on $\mathbb{Z}^2$, a.s., there is no ``ribbon'' whose complement contains at least 3 site-components. To apply this result, we define variables $(x_v)_{v \in \mathbb{Z}^2}$ as $x_v = \mathbf{1}_{\{v \in X\}}$. The $x_v$ form a stationary site percolation, and any infinite site-component of $X$ (a type (b) set) is a ``1-ribbon'' (the closure of an infinite $1$-cluster). The result follows.

Items 2 and 3 have similar proofs, and follow that of \cite[Thm.~1]{BK2} (see also \cite{G}) , which states that in stationary site percolation on $\mathbb{Z}^2$, a.s., no ``rock'' has at least 3 ribbons as $\ast$-neighbors. Because the details are the same, we only sketch the proofs. To any dual vertex $v$ as described in item 2, we associate three infinite vertex self-avoiding site-paths $\gamma_i(v)$ in different sets of types (a) or (b), starting from vertices within Euclidean distance $\sqrt{2}/2$ of $v$. For any such $v \in [0,n]^2$, consider the first intersection $x_i(v)$ of $\gamma_i(v)$ with $\partial [0,n]^2$, and ordering them so that $x_1(v),x_2(v),x_3(v)$ are in counterclockwise order, we say that $x_2(v)$ is the ``central point'' associated with $v$. (It appears that even in the original argument of \cite{BK2}, a bit more care needs to be taken to define the central point: the $x_i(v)$ should be chosen as functions of the intersection of their corresponding (a) or (b) set with $[0,n]^2$.) One then uses a Jordan curve argument to prove that central points corresponding to different $v$'s are distinct, and therefore there can be at most $4n$ such $v$'s in $[0,n]^2$. (In this part, it is important that for given $v,v'$, the starting points of the $\gamma_i(v')$'s must all lie in the closure of one of the regions between the $\gamma_i(v)$'s.) However, if the event described in item 2 has positive probability, translation invariance implies that the expected number of $v$ in $[0,n]^2$ is of order $n^2$, a contradiction for large $n$.

The argument for item 3 is similar, defining three paths corresponding to each type (c) set as described, paths $\gamma_i$, and corresponding central points. Again, central points associated to distinct such type (c) sets are distinct, and we conclude as above.
 \end{proof}
 
 Given these preparations, we now prove that $\mathcal{N}$ has at most 2 infinite components.
\begin{proof}[Proof of Theorem~\ref{thm: two_components}]
We start by defining the $\ast$-boundary for an infinite component $C$ of $\mathcal{N}$, using our dual path $B(C)$. Writing $\mathsf{D}$ for the event in Proposition~\ref{prop: double}, consider a configuration in $\mathsf{D}$ in which $\mathcal{N}$ has at least 2 infinite components. Let $C$ be any one of them, and note that $B(C)$ is a (nonempty) doubly-infinite, vertex self-avoiding dual path. Enumerate the dual edges of $B(C)$ as $\dots, e_{-1}, e_0, e_1, \dots$, and write $x_i$ for the vertex of $\left( \overline{V(C)}\right)^c$ which is an endpoint of the edge whose dual is $e_i$. The sequence $(x_n)$ is $\ast$-connected (it is in fact a $\ast$-path), but it not necessarily site-connected. To remedy this, we define a doubly-infinite sequence $P = P(C)$ of vertices by following the $x_i$'s, but inserting between any $x_i$ and $x_{i+1}$ which are $\ast$-neighbors but are not site-neighbors their unique common site-neighbor which is in $\left(\overline{V(C)}\right)^c$. (For example, if $x_i = (0,0)$ and $x_{i+1} = (1,1)$, with $e_i = \{(-1/2,1/2),(1/2,1/2)\}$ and $e_{i+1} = \{(1/2,1/2),(1/2,3/2)\}$, then this neighbor is the vertex $(1,0)$. In fact, this is the only possible case up to translation, reflection, and rotation by multiples of $\pi/2$.) Then $P$ is a natural enumeration of the $\ast$-boundary of $\overline{V(C)}$ and is clearly site-connected (it is a path which might not be vertex self-avoiding). For our given $C$ and $P$, we will consider the different possible sets of types (a)-(c) which can intersect $P$. 

First, we argue that no infinite component can be a $\ast$-neighbor to two other sets of type (a) or (b):
\begin{equation}\label{eq: triple_touch_assumption}
\mathbb{P}\left( \begin{array}{c}
\exists x,y \in P = P(C) \text{ in different sets of type (a)}\\
\text{or (b) for some infinite component } C \text{ of } \mathcal{N}
\end{array}
\right) = 0.
\end{equation}
For a contradiction, assume that this probability is positive. We will show that in this case, we can find either a dual vertex as in item 2 of Lemma~\ref{lem: BK2} or a set of type (c) as in item 3 of Lemma~\ref{lem: BK2}. To do this, consider an outcome in the intersection of $\mathsf{D}$ with the event in \eqref{eq: triple_touch_assumption}, with $C,P,x,y$ as described, and write $B_1$ for the set of type (a) or (b) containing $x$. Following $P$ from $x$ to $y$, we must exit $B_1$. 

{\it Case 1.} If we enter a different type (a) or (b) set, say $B_2$, then write $u$ for the last point in $P$ of $B_1$ before entering $B_2$, and $v$ for the first point of $B_2$. If $u = x_n$ for some $n$, then after translating, rotating, and reflecting, we may assume that $u = (0,0)$ and $(0,1) \in V(C)$ (so that $e_n = \{(-1/2,1/2),(1/2,1/2)\} \in B(C)$). If $e_{n+1} = \{(1/2,1/2),(3/2,1/2)\}$, then $v = x_{n+1} = (1,0)$ then the dual vertex $(1/2,1/2)$ satisfies item 2 of Lemma~\ref{lem: BK2} with sets $B_1,B_2, \overline{V(C)}$. (See Figure~\ref{fig: two_components_cases}.) If instead $e_{n+1} = \{(1/2,1/2),(1/2,3/2)\}$, then $v = (1,0)$ is in $P$ but not in the sequence $(x_n)$; however, $(1/2,1/2)$ still satisfies item 2 with sets $B_1,B_2,\overline{V(C)}$. We cannot have $e_{n+1} = \{(1/2,1/2),(1/2,-1/2)\}$ because then $v = x_n = u$, but $u$ and $v$ are in different sets of type (a) or (b). The last possibility is that $u$ is not any of the $x_n$'s, in which case after translating, rotating, and reflecting, we may assume that $u = (1,0)$ and $v = (1,1)$ with $(0,1) \in V(C)$. Here, again the vertex $(1/2,1/2)$ satisfies item 2 with the same sets $B_1,B_2,\overline{V(C)}$.

\begin{figure}
\hbox{\hspace{2cm}\includegraphics[width=.8\textwidth, trim={7cm 13cm 7cm 8cm}, clip]{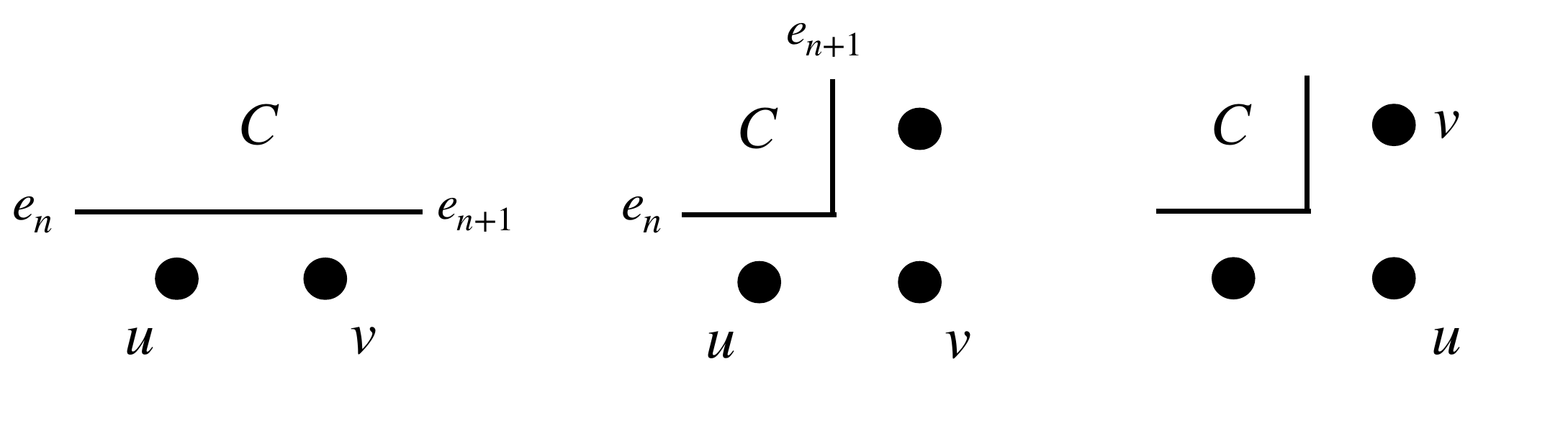}}
  \caption{The three possibilities in the argument of case 1 for equation \eqref{eq: triple_touch_assumption}. In the first, $u=x_n$ for some $n$ and $v$ is as well. In the second, $u= x_n$ for some $n$, but $v$ is not, although it is in $P$. In the last, $u \in P$ is not of the form $x_n$ but $v$ is. In all possibilities, the central dual vertex (taken to be $(1/2,1/2)$) satisfies the condition of item 2 of Lemma~\ref{lem: BK2}.}
  \label{fig: two_components_cases}
\end{figure}

{\it Case 2.} The other possibility is that, after we leave $B_1$, we enter a set $Y$ of type (c). In this case, $B_1$ must be of type (a). If, as we proceed along $P$, we next re-enter $B_1$, we simply wait until we leave $B_1$ once again. Otherwise, once we leave $Y$, we enter another set $B_2$ of type (a) which is not $B_1$ and also not $\overline{V(C)}$. In this case, the set $Y$ is site-adjacent to both $B_1$ and $B_2$ but also $\ast$-adjacent to $\overline{V(C)}$ (as every vertex of $P$ is $\ast$-adjacent to $\overline{V(C)}$). Therefore $Y$ satisfies item 3 of Lemma~\ref{lem: BK2}.

In either case, assuming that \eqref{eq: triple_touch_assumption} fails implies that at least one of the events described in Lemma~\ref{lem: BK2} has positive probability, a contradiction. Therefore \eqref{eq: triple_touch_assumption} holds.

Next, we argue that if any infinite component of $\mathcal{N}$ is a $\ast$-neighbor of another one (by the above, the component can have at most one such $\ast$-neighbor), then there are exactly two infinite components of $\mathcal{N}$:
\begin{equation}\label{eq: case_2_assumption}
\mathbb{P}\left( \begin{array}{c}
\exists x \in P=P(C) \text{ in a set of type (a) for some infinite}\\
\text{component }C \text{ of } \mathcal{N} \text{ and } \mathcal{N} \text{ has at least }3 \text{ infinite components}
\end{array}
\right) = 0.
\end{equation}
As before, we argue by contradiction and assume that this probability is positive. Choose any outcome in the intersection of $\mathsf{D}$ and this event such that the event in \eqref{eq: triple_touch_assumption} does not occur, and pick $C,x$ as described in \eqref{eq: case_2_assumption}. Note that if $\overline{V(C')}$ is the set of type (a) containing $x$, then infinitely many vertices of $P$ (in both directions) are in $\overline{V(C')}$. (This implies that $P$ consists of vertices of $\overline{V(C')}$ separated by finite segments of vertices in (c) components.) Indeed, if this were not true, then in some direction along $P$, all vertices from some point on would be in $X$ (since they could not be in another type (a) set due to the event in \eqref{eq: triple_touch_assumption} not occurring). But these vertices are site-connected, so they would be part of a set of type (b), and this would also lead us back to the event in \eqref{eq: triple_touch_assumption}.

Because the event in \eqref{eq: case_2_assumption} occurs, we can choose, in addition to the components $C$ and $C'$, yet another infinite component $C''$ of $\mathcal{N}$. Pick a site-connected path $\pi$ from $C''$ to $C$ and follow it until its first vertex in $\overline{V(C)} \cup \overline{V(C')}$ (this cannot be the initial vertex of $\pi$). By symmetry, we may assume that this vertex is in $\overline{V(C)}$. Let $w$ be the vertex of $\pi$ directly before it. Since $w \in P$ (as it is site-adjacent to $\overline{V(C)}$), but not in $\overline{V(C')}$, and vertices of $P$ are either in $\overline{V(C')}$ or in type (c) sets, $w$ must be in a set $Y$ of type (c). Note that then $Y$ is site-adjacent to $\overline{V(C)}$, but it is also site-adjacent to $\overline{V(C')}$, as we can follow $P$ through $Y$ directly to $\overline{V(C')}$. Following $\pi$ backward from $w$ until we exit $Y$, we must enter another type (a) set, but this set cannot be $\overline{V(C)}$ or $\overline{V(C')}$. We conclude that $Y$ is site-adjacent to 3 sets of type (a); that is, $Y$ satisfies the condition of item 3 of Lemma~\ref{lem: BK2}. Just as in case 1, we see that our assumption that \eqref{eq: case_2_assumption} fails implies the event described in item 3 of Lemma~\ref{lem: BK2} has positive probability, a contradiction. Therefore \eqref{eq: case_2_assumption} holds.

Last, we deal with the remaining possibility: that all vertices of $P = P(C)$ are in one set of type (b), and that this holds for all infinite components $C$ of $\mathcal{N}$. In this case, we will show that the number of infinite components of $\mathcal{N}$ is at most two: 
\begin{equation}\label{eq: case_3_assumption}
\mathbb{P}\left( \begin{array}{c}
\text{all }x \in P = P(C) \text{ are in a set of type (b) for all infinite}\\ 
\text{components } C \text{ of } \mathcal{N} \text{ and } \mathcal{N} \text{ has at least }3 \text{ infinite components}
\end{array}
\right) = 0.
\end{equation}
For a contradiction, assume this probability is positive, and consider any outcome in the intersection of $\mathsf{D}$ and this event. Pick infinite components $C_1, C_2, C_3$ of $\mathcal{N}$ and write $P_1, P_2, P_3$ for their corresponding $\ast$-boundary paths. Then $P_i$ is contained entirely in some set $Y_i$ of type (b). In this case, we will show that all $Y_i$'s are equal and that their complement has at least 3 site-components, as in item 1 of Lemma~\ref{lem: BK2}.

Choose a site-connected path $\pi$ from $C_i$ to $C_j$ for some $i \neq j$. Let $w$ be the vertex directly before entering $\overline{V(C_j)}$ for the first time and note that since $w$ is site-adjacent to $\overline{V(C_j)}$, it must be in $P_j$, and therefore in $Y_j$. Following $\pi$ backward from $w$ until it last leaves $Y_j$ at some vertex $z$, we see that $z$ must be in $\overline{V(C_i)}$. The reason is that otherwise $z$ must be in another set of type (a), and this set would have $\ast$-boundary path fully contained in $Y_j$ (as it is site-adjacent to $Y_j$), so to reach $C_i$, we would need to re-enter $Y_j$, contradicting the definition of $z$. We find, therefore, that $Y_j$ is site-adjacent to $\overline{V(C_i)}$ as well, and so $Y_i = Y_j$.

Because $Y_1=Y_2=Y_3$, each of $\overline{V(C_1)}, \overline{V(C_2)}, \overline{V(C_3)}$ are $\ast$-adjacent only to $Y_1$, This implies that the sets $\overline{V(C_i)}$ are contained in different site-components of $Y_1^c$ (otherwise we could move from one to the other without touching $Y_1$). Thus $Y_1$ satisfies the condition of item 1 of Lemma~\ref{lem: BK2}. This means that if we assume that \eqref{eq: case_3_assumption} fails, then the event described in item 1 has positive probability, a contradiction. Therefore \eqref{eq: case_3_assumption} holds. This completes the proof.
\end{proof}

\subsection{Proof of Theorem~\ref{thm: construction}}\label{sec: first_proof}

Suppose that $G = (V,E)$ is a graph such that $E$ is countable and $\mathbb{G} = (V,\mathbb{E})$ is a directed graph with the properties stated in the theorem: if $\langle x,y \rangle \in \mathbb{E}$ then $\{x,y\} \in E$, each $x \in V$ has out-degree one in $\mathbb{G}$, $\mathbb{G}$ has no directed cycles of length at least three, and for each vertex $x \in V$, $C_x$ is finite. We define weights $\omega(e)$ for edges $e\in E$ as follows. Let $(U(e))_{e \in E}$ be a collection of i.i.d.~uniform $[0,1]$ random variables and let $\mathsf{E}$ be the set of edges $\{x,y\} \in E$ such that $\langle x,y \rangle \in \mathbb{E}$. If $e \in \mathsf{E}$ with $e = \{x,y\}$ and $\langle x,y \rangle \in \mathbb{E}$, write $V(e)$ for the number of vertices in $C_x$. Note that $V$ is well-defined: if $\langle x,y \rangle$ and $\langle y, x \rangle$ are both in $\mathbb{E}$, then the graphs $C_x$ and $C_y$ are the same, so they have the same number of vertices. Our definition of $\omega(e)$ is
\begin{equation}\label{eq: omega_def}
\omega(e) = \begin{cases}
\frac{1}{V(e) + U(e)} & \quad \text{if } e \in \mathsf{E} \\
1 + U(e) & \quad \text{otherwise}.
\end{cases}
\end{equation}

Note that a.s., the weights $\omega(e)$ are distinct. Therefore to prove Theorem~\ref{thm: construction}, we will show that a.s., the nearest neighbor graph $\mathcal{N}_D$ constructed from the weights is equal to $\mathbb{G}$. First suppose that $\langle x,y \rangle$ is an edge of $\mathbb{G}$; we will prove that it is in $\mathcal{N}_D$. To do this, we will show that for $z \neq y$ with $\{z,x\} \in E$, we have $\omega(\{z,x\}) > \omega(\{x,y\})$. There are two cases. If $\{z,x\} \notin \mathsf{E}$, then a.s.
\[
\omega(\{z,x\}) = 1 + U(\{z,x\}) > 1 > \frac{1}{V(\{x,y\}) + U(\{x,y\})} = \omega(\{x,y\}),
\]
since $V(\{x,y\}) \geq 1$. Alternatively, if $\{z,x\} \in \mathsf{E}$, then because $x$ has out-degree one in $\mathbb{G}$, $\langle z,x \rangle \in \mathbb{E}$ and therefore $V(\{z,x\})$ is the number of vertices in $C_z$. We claim that $C_z$ is strictly contained in $C_x$: each vertex of $C_z$ is in $C_x$, but $x \notin C_z$. Assuming this for the moment, we obtain $V(\{x,z\}) \leq V(\{x,y\}) - 1$ and so a.s.
\[
\omega(\{x,y\}) = \frac{1}{V(\{x,y\}) + U(\{x,y\})} < \frac{1}{V(\{z,x\}) + 1} < \frac{1}{V(\{z,x\}) + U(\{z,x\})} = \omega(\{z,x\}).
\]
This implies that a.s.~if $\langle x,y \rangle \in \mathbb{E}$ then $\langle x,y \rangle$ is an edge of $\mathcal{N}_D$.

To prove the claim, observe that if $w$ is a vertex of $C_z$ then it is clearly a vertex of $C_x$: any directed path from $w$ to $z$ can be extended to $x$ by simply appending the edge $\langle z,x\rangle$ to the end. So we assume for a contradiction that $x$ is a vertex of $C_z$. Then there is a directed path $\pi$ from $x$ to $z$ in $\mathbb{G}$ (which we may assume is vertex self-avoiding). This path cannot contain an edge from $x$ to $z$ since $\langle x,z \rangle \notin \mathbb{E}$, so it must first visit some $u$ which is not equal to $x$ or $z$. But then appending the edge $\langle z,x \rangle$ to the end of $\pi$ produces a directed cycle of length at least three in $\mathbb{G}$, a contradiction. We conclude that $x$ is not a vertex of $C_z$ and therefore the claim holds.

To complete the proof of Theorem~\ref{thm: construction}, suppose that $\langle x,y \rangle \notin \mathbb{E}$ but $\{x,y\} \in E$. Since $x$ has out-degree one in $\mathbb{G}$, there is some $z \neq y$ such that $\langle x,z \rangle \in \mathbb{E}$. As we saw above, this implies a.s.~that $\langle x,z\rangle$ is an edge of $\mathcal{N}_D$. Since $x$ a.s.~has out-degree at most one in $\mathcal{N}_D$ (as the weights are distinct), we find that $\langle x,y\rangle$ cannot be an edge of $\mathcal{N}_D$. This shows that the edges of $\mathbb{G}$ and $\mathcal{N}_D$ are the same, and finishes the proof.

\subsection{Proof of Corollary~\ref{cor: main_result}}\label{sec: corollary}
To prove the corollary, we will build various random directed graphs and use Theorem~\ref{thm: construction} to exhibit them as nearest neighbor graphs for some choices of weights. Because i.i.d., continuously distributed weights produce nearest neighbor graphs with all finite components, it suffices to take $k \geq 1$. Our underlying graph will be $\mathbb{Z}^d$ for some $d \geq 2$ and we will identify any directed graph $\mathbb{G}$ with vertex set $\mathbb{Z}^d$ with a point in the space $\{0,1\}^{\vec{\mathcal{E}}^d}$, where $\vec{\mathcal{E}}^d$ is the set $\{\langle x,y \rangle : x,y \in \mathbb{Z}^d, \|x-y\|_1 = 1\}$ of directed edges of the lattice. (We give this space the usual product Borel sigma-algebra.) Any translation $T^z$ acts on this space just as it did on the edge-weight space $\Omega$: for $\eta \in \{0,1\}^{\vec{\mathcal{E}}^d}$, we set $(T^z\eta)(\langle x,y \rangle) = \eta(\langle x+z,y+z\rangle)$. Last, to a directed graph $\mathbb{G} = \left(\mathbb{Z}^d, \mathbb{E}\right)$ we naturally associate the point $\eta = \eta(\mathbb{G})$:
\[
\eta(\langle x,y \rangle) = \begin{cases}
1 &\quad \text{if } \langle x,y \rangle \in \mathbb{E} \\
0 &\quad \text{otherwise}.
\end{cases}
\]
By Theorem~\ref{thm: construction}, to show that there is a measure $\mathbb{P}$ satisfying {\bf A} such that a.s. the graph $\mathcal{N}_D$ has, say, property $P$, it suffices to show that there is a random directed graph $\mathbb{G}$ (measure on $\{0,1\}^{\vec{\mathcal{E}}^d}$) which is invariant under all translations $T^z$ such that a.s., $\mathbb{G}$ has property $P$ and the properties stated in the theorem: if $\langle x,y \rangle \in \mathbb{E}$ then $\{x,y\} \in \mathcal{E}^d$, each $x \in \mathbb{Z}^d$ has out-degree one in $\mathbb{G}$, $\mathbb{G}$ has no directed cycles of length at least three, and for each $x \in \mathbb{Z}^d$, $C_x$ is finite. This is because if the distribution of $\mathbb{G}$ is invariant under translations, then the weights $(\omega(e))$ defined in \eqref{eq: omega_def} will be as well, so {\bf A1} will hold, and the weights are all distinct, so {\bf A2} will hold as well.

Given these preliminaries, we move to the constructions.

\subsubsection{Case $d=2$ and $k=2$}\label{sec: zerner}
To construct the measure $\mathbb{P}$ for the case $d=2$ and $k=2$, we use the graph from \cite[Sec.~3]{ZernerMerkl}. It is a.s.~a union of two directed trees, each built from coalescing random walks: one tree moves up-right and the other moves down-left, both trees being dual to each other. Let $(B_x)_{x \in \mathbb{Z}^2}$ be a family of i.i.d.~Bernoulli$(1/2)$ random variables and define a directed graph $\mathbb{G}_0$ using these variables as follows. The vertex set of $\mathbb{G}_0$ is $\mathbb{Z}^2$ and the edge set is
\begin{align*}
&\bigcup_{x : B_x = 1} \left\{ \langle 2x,2x+e_2\rangle, \langle 2x+e_2,2x+2e_2\rangle, \langle 2x+e_1, 2x+e_1-e_2\rangle, \langle 2x+e_1+e_2, 2x+e_1\rangle\right\} \\
\bigcup~&\bigcup_{x : B_x = 0} \left\{ \langle 2x, 2x+e_1\rangle, \langle 2x+e_1, 2x+2e_1\rangle, \langle 2x+e_2, 2x+e_2-e_1\rangle, \langle 2x+e_1+e_2, 2x+e_2\rangle \right\}.
\end{align*}
Here $e_1$ and $e_2$ are the standard basis vectors of $\mathbb{R}^2$. (See \cite[Figs.~2,3]{ZernerMerkl} for illustrations of the structure of $\mathbb{G}_0$.) Although $\mathbb{G}_0$ is not translation invariant, we can remedy this by letting $U$ be an independent uniform vector on the set $\{0, e_1, e_2, e_1+e_2\}$ and setting $\mathbb{G}$ to be the translation of $\mathbb{G}_0$ by $U$. That is, $\mathbb{G}$ has vertex set $\mathbb{Z}^2$ but edge set
\[
\left\{ \langle x+U,y+U\rangle : \langle x,y \rangle \text{ is an edge of } \mathbb{G}_0\right\}.
\]
In \cite[p.~1730]{ZernerMerkl} it is shown that the distribution of $\mathbb{G}$ is invariant (and even ergodic) under lattice translations.

By construction, a.s.~each vertex has out-degree one in $\mathbb{G}$ and the graph $\Gamma_x$ obtained by starting with a vertex $x$ and following each out-edge is a symmetric random walk that, once it reaches a vertex $z$, steps either (a) up twice or right twice if $z\cdot (1,1)$ is even or (b) down twice or left twice if $z\cdot (1,1)$ is odd. Because all paths of type (a) intersect, as do all paths of type (b), but paths of type (a) do not intersect those of type (b), it follows that the undirected version of $\mathbb{G}$ (the graph with the same vertex set but edge set equal to $\{\{x,y\} : \langle x,y \rangle \text{ is an edge of } \mathbb{G}\}$) has exactly two components a.s. (These are the two directed trees mentioned above.) In \cite[p.~1730]{ZernerMerkl} it is shown that ``for any $x$ the subtree for which $x$ is the root is a.s.~finite.'' In our notation, this means that for each $x \in \mathbb{Z}^2$, the number of vertices in $C_x$ is finite. Because this $\mathbb{G}$ a.s.~satisfies the conditions of Theorem~\ref{thm: construction}, is invariant under translations, and its undirected version has two components, this completes the case $d=2$ and $k=2$.

\begin{remark}\label{rem: topology}
In the language of the proof of Theorem~\ref{thm: two_components}, the above example exhibits the lattice $\mathbb{Z}^2$ as a union of two disjoint type-(a) sets, $V_1$ and $V_2$, corresponding to the up-right tree and the down-left tree. The dual edge boundaries $B(V_1)$ and $B(V_2)$ coincide and, furthermore, each vertex of $\mathbb{Z}^2$ is an endpoint of an edge dual to one on this boundary. One can modify this example to produce a model consisting of two type-(a) sets separated by type-(c) sets as follows. For any $z$ such that all vertices $x$ with $\langle x,z \rangle$ an edge of $\mathbb{G}$ satisfy $\#C_x =1$, we remove the out-edge of $z$ from $\mathbb{G}$ and add a new out-edge from $z$ to any such $x$ (choosing one in a deterministic manner). The resulting directed graph is then seen to be a nearest neighbor graph for weights satisfying assumption {\bf A}, and every $z$ listed above becomes part of a type-(c) set separating the two type-(a) sets.

In fact, one can also construct a nearest neighbor graph $\mathbb{G}$ on $\mathbb{Z}^2$ which splits the lattice into two type-(a) sets separated by a type-(b) set. To do this, we start with a stationary site-percolation model which a.s.~exhibits two infinite 1-clusters (which are topological half-planes) separated by an infinite 0-cluster (which is topologically a strip). This can be done by choosing finite order type $I= \{1,2,3\}$ in \cite[p.~309]{BK2}. Then we place independent uniform spanning trees on the subgraphs of $\mathbb{Z}^2$ induced by the infinite 1-clusters, and independent i.i.d.~uniform $(0,1)$ weights on the edges of the subgraph of $\mathbb{Z}^2$ induced by the infinite 0-cluster. Our final graph $\mathbb{G}$ is the union of the spanning trees along with the standard i.i.d.~nearest neighbor model on the 0-cluster. One can show that since each 1-cluster is topologically a half-plane, the spanning trees are one-ended (and therefore we can orient them toward infinity), and thus form the two infinite components of our nearest neighbor graph $\mathbb{G}$. The graph constructed on the 0-cluster is a union of infinitely many finite components of our graph $\mathbb{G}$ and, since the 0-cluster is site-connected, it forms a type-(b) set.
\end{remark}

\subsubsection{Case $d\geq 3$ and $k=\infty$}\label{sec: infinitely_many}
To prove item 2 in the case $d \geq 3$ and $k=\infty$ we use a layered construction. We produce a translation-invariant random graph $\mathbb{G}$ with vertex set $\mathbb{Z}^d$ which satisfies the conditions of Theorem~\ref{thm: construction} and whose undirected version has infinitely many infinite components. We do this by induction, so suppose there is such a random graph $\mathbb{G}_d$ for a given dimension $d$ with at least two infinite components (by the above argument, we know this is true for $d=2$); we will show one exists in dimension $d+1$ with infinitely many infinite components. Write $\eta_d$ for the (random) point in $\{0,1\}^{\vec{\mathcal{E}}^d}$ corresponding to $\mathbb{G}_d$ in $d$-dimensions and define an element $\eta \in \{0,1\}^{\vec{\mathcal{E}}^{d+1}}$ by
\[
\eta\left( \langle x,y \rangle \right) = \begin{cases}
\eta_d\left( \langle \pi_d(x), \pi_d(y) \rangle \right) & \quad \text{if } \pi_d(x) \neq \pi_d(y) \\
0 & \quad \text{otherwise}.
\end{cases}
\]
Here $\pi_d : \mathbb{Z}^{d+1} \to \mathbb{Z}^d$ is the projection $\pi_d(x) = \sum_{i=1}^d (x \cdot e_i)e_i$. If $\mathbb{G}$ is the graph corresponding to $\eta$, then the intersection of $\mathbb{G}$ with each hyperplane $\{x \cdot e_{d+1} = n\}$ (for $n \in \mathbb{Z}$) is a copy of $\mathbb{G}_d$, and there are no edges in $\mathbb{G}$ connecting these hyperplanes. Therefore the distribution of $\mathbb{G}$ is invariant under translations. Furthermore, each $x \in \mathbb{Z}^{d+1}$ has out-degree one, it has no directed cycles of length at least three, and each $C_x$ is finite. Last, since the undirected version of $\mathbb{G}_d$ has multiple infinite components, so does the undirected version of $\mathbb{G}$ (in fact it has infinitely many). This proves the case $d\geq 3$ and $k=\infty$.

\subsubsection{Case $d \geq 2$ and $k=1$}\label{sec: dyadic}
For the case $d \geq 2$ and $k=1$, we give a dyadic construction. Define the orthant $\mathsf{O} = \{x \in \mathbb{Z}^d : x \cdot e_i \geq 0 \text{ for all } i \}$. For any nonzero $x \in \mathsf{O}$, let
\[
k(x) = \min\left\{k \geq 1 : x /2^k \notin \mathbb{Z}^d\right\}.
\]
Because $x/2^{k(x)-1} \in \mathbb{Z}^d$ but $x/2^{k(x)} \notin \mathbb{Z}^d$, at least one coordinate of $x/2^{k(x)-1}$ is odd. Let $i(x)$ be the largest such index. (For example, if $x = (4,8,15)$ then $k(x) = 1$ and $i(x) = 3$, and if $x=(0,8,16)$ then $k(x) = 4$ and $i(x) = 2$.) Now define $\eta_0 \in \{0,1\}^{\vec{\mathcal{E}}^d}$ by
\[
\eta_0\left( \langle x, x-e_{i(x)}\rangle\right) = 1 \text{ for all nonzero }x \in \mathsf{O},
\]
and $\eta_0\left( \langle x,y \rangle\right) = 0$ for all other directed edges $\langle x,y \rangle$. Write $\mathbb{G}_0$ for the directed graph corresponding to $\eta_0$. See Figure~\ref{fig: dyadic}.

	\begin{figure}
\hbox{\hspace{3cm}\includegraphics[width=.6\textwidth, trim={10cm 10cm 6cm 2cm}, clip]{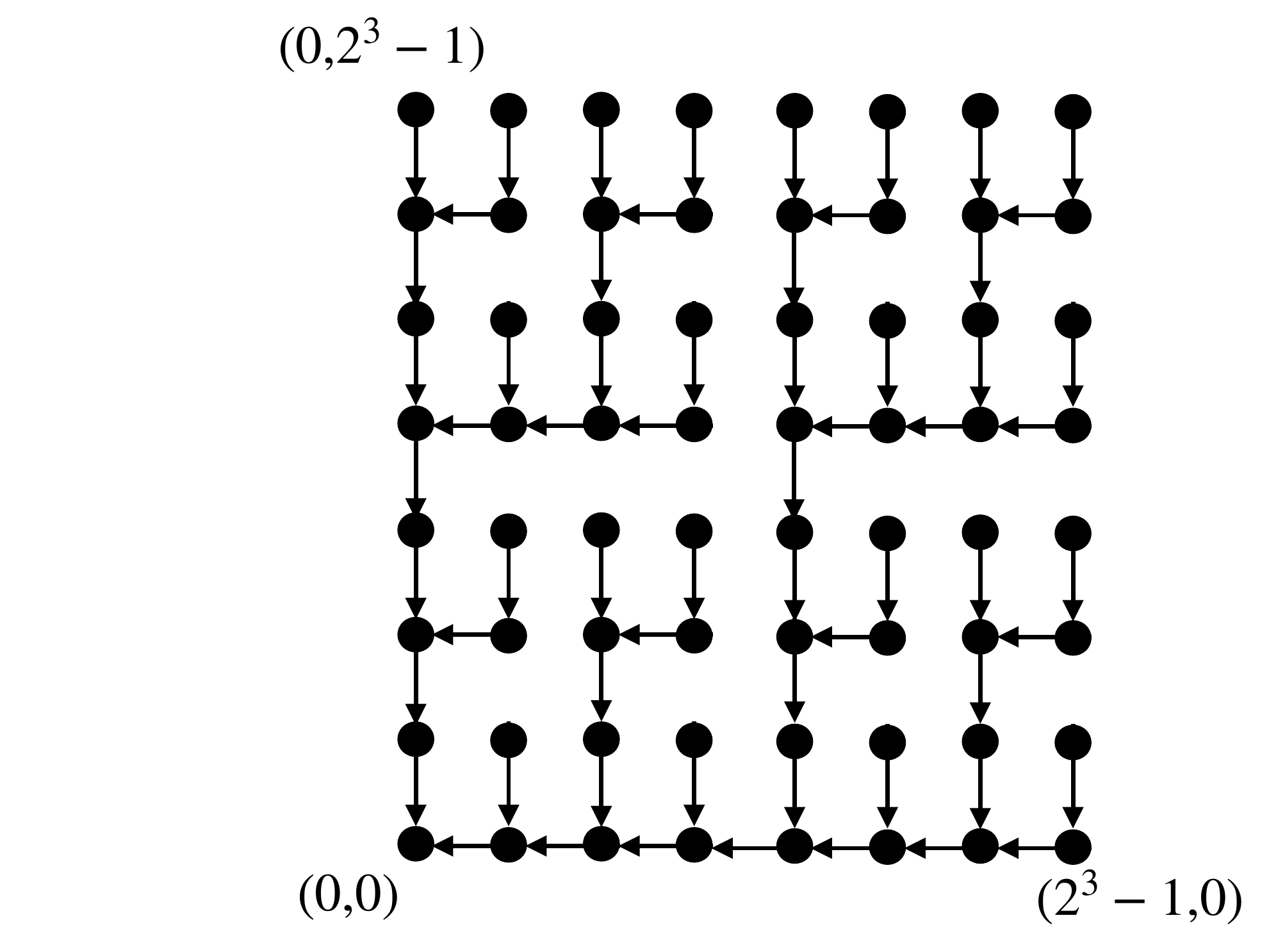}}
  \caption{Illustration of the dyadic construction of $\eta_0$ for $d=2$ restricted to the box $\mathsf{C}_3$ of side-length $8=2^3$. In each block of side-length 2, arrows point down from the top level, and to the left from the lower-right vertex. In each block of side-length 4, we repeat this construction, viewing the side-length 2 blocks as vertices: the top-level blocks point down and the lower-right block points left. In blocks of side-length 8 (the whole figure), the top-level side-length 4 blocks point down and the lower-right one points left.}
  \label{fig: dyadic}
\end{figure}

In $\mathbb{G}_0$, the vertex 0 has out-degree 0, as does every vertex that is not in $\mathsf{O}$. Each non-zero $x \in \mathsf{O}$ has out-degree one. Therefore the directed subgraph $\Gamma_x^0$ of $\mathbb{G}_0$ induced by the vertices $y$ such that $x \to y$ is a directed path. Starting from any $x \in \mathsf{O}$ and moving along $\Gamma_x^0$, the vertices are obtained from $x$ as follows. We decrement the largest odd coordinate of $x$ by 1, then the next largest odd coordinate by 1, and so on, until all coordinates are even. Then we decrement the largest coordinate that is not divisible by 4 repeatedly until it becomes divisible by 4, then the next largest coordinate that is not divisible by 4, and so on, until all coordinates are divisible by 4. We then iterate the steps with 8 in place of 4, then 16, and all powers of 2, until we reach the origin. We observe that in this procedure,
\begin{equation}\label{eq: divisibility}
\text{after all coordinates are divisible by }2^k\text{, at most one is not at any time}.
\end{equation}

For any $k \geq 1$, $\mathsf{O}$ is a disjoint union of ``$2^k$-boxes'' of the form $2^kz + \mathsf{C}_k$, where $\mathsf{C}_k = \{0, \dots, 2^k-1\}^d$ and $z \in \mathsf{O}$. Here,
\begin{equation}\label{eq: z_form}
\text{if }x \in 2^kz + \mathsf{C}_k \text{, then } z = \left( \left\lfloor (x/2^k) \cdot e_1 \right\rfloor, \dots, \left\lfloor (x/2^k) \cdot e_d \right\rfloor\right).
\end{equation}
Note that for $k \geq 1$ and $z \in \mathsf{O}$,
\begin{equation}\label{eq: k_box}
\text{if }x \in 2^kz + \mathsf{C}_k \text{ then }\Gamma_x^0 \text{ touches } 2^kz \text{ within } d2^k \text{ steps without leaving }2^kz+\mathsf{C}_k. 
\end{equation}
This follows from the above description of $\Gamma_x^0$: as we traverse $\Gamma_x^0$, we start at $x$ and decrement coordinates in the order described above until all coordinates are divisible by $2^k$. 

Before we translate and average to build a measure, we note the following properties of $\mathbb{G}_0$:
\begin{enumerate}
\item[(A)] If $\langle x,y \rangle$ is an edge of $\mathbb{G}_0$, then $\{x,y\} \in \mathcal{E}^d$. This is clear by the construction.
\item[(B)] Each $x \in \mathsf{O}$ that is nonzero has out-degree one in $\mathbb{G}_0$. This was stated above.
\item[(C)] $\mathbb{G}_0$ has no directed cycles. This is because each directed edge points in the direction of decreasing $i$-th coordinate for some $i$.
\item[(D)] If $x \in \mathsf{O}$ is in a $2^k$-box $2^kz + \mathsf{C}_k$ for $z \in \mathsf{O}$ and $k \geq 1$, and has at least two coordinates which are not multiples of $2^k$, then $C_x^0$ (the subgraph of $\mathbb{G}_0$ induced by $y$ such that $y \to x$ in $\mathbb{G}_0$) contains no vertices outside of $2^kz + \mathsf{C}_k$. To prove this, suppose that $y$ is a vertex of $C_x^0$. If $y$ is in a different $2^k$-box from $x$, say $2^kz' + \mathsf{C}_k$ for $z' \neq z$, then by \eqref{eq: k_box}, traversing $\Gamma_y^0$ leads us to $2^kz'$ without leaving $2^kz' + \mathsf{C}_k$ (in particular not touching $x$). Because of \eqref{eq: divisibility} and the assumed properties of $x$, $\Gamma_y^0$ never touches $x$.
\item[(E)] For $k \geq 1$, if $x,y$ are elements of the same $k$-box, then while traversing $\Gamma_x^0$ starting from $x$, we intersect $\Gamma_y^0$ within $d2^k$ steps. Indeed, we note that such $x$ and $y$ can be written for some $z \in \mathsf{O}$ as
\[
x = 2^kz + x',~ y = 2^kz + y' \text{ for } x',y' \in \mathsf{C}_k.
\]
So by \eqref{eq: k_box}, after at most $d2^k$ steps on $\Gamma_x^0$, we reach $2^kz$ (and similarly for $\Gamma_y^0$).
\end{enumerate}

The next step is to define a sequence of variables $(Z_n)_{n \geq 1}$ such that $Z_n$ is uniform on $\{0, \dots, 2^n-1\}^d$, and set $\eta_n = T^{Z_n}\eta_0$ to be the translation of $\eta_0$ by $Z_n$. Because the $\eta_n$ form a tight sequence (the space $\{0,1\}^{\vec{\mathcal{E}}^d}$ is compact), there is a subsequence $(n_i)$ such that $\eta_{n_i}$ converges in distribution to some $\eta$. (In fact, a subsequence is not necessary.) Letting $\mathbb{G}$ be the (random) directed graph corresponding to $\eta$, it is standard that $\mathbb{G}$ is invariant under translations. We are then left to prove that a.s., $\mathbb{G}$ satisfies the conditions of Theorem~\ref{thm: construction}, and that a.s., the undirected version of $\mathbb{G}$ has only one component.

To show the desired properties of $\mathbb{G}$, we start with item 1 of Theorem~\ref{thm: construction}, and this is the most obvious. For any $x,y \in \mathbb{Z}^d$, the event $\left\{\tau \in \{0,1\}^{\vec{\mathcal{E}}^d}  : \tau(\langle x,y \rangle) = 1\right\}$ is a cylinder event, so its indicator is a (bounded) continuous function. Therefore $\mathbb{P}(\eta(\langle x,y \rangle) = 1) = \lim_{k \to \infty} \mathbb{P}(\eta_{n_k}(\langle x,y \rangle) = 1)$. If $x$ and $y$ are not neighbors (that is, $\|x-y\|_1 > 1$), this probability is zero by item (A) above. This means $\mathbb{G}$ satisfies item 1 of Theorem~\ref{thm: construction} a.s.

For item 2, note that the event that the origin has out-degree one is a cylinder event. Again this implies that
\[
\mathbb{P}(0 \text{ has out-degree one in } \mathbb{G}) = \lim_{i \to \infty} \mathbb{P}(Z_{n_i} \text{ has out-degree one in } \mathbb{G}_0).
\]
By item (B), the right side equals $\lim_{i \to \infty} (1-1/2^{dn_i}) = 1.$ By translation invariance, we conclude item 2.
By a similar argument, we can show item 3: a.s.~$\mathbb{G}$ has no directed cycles of length at least 3. (In fact, it has no directed cycles.) Letting $\mathcal{C}$ be a deterministic (finite) directed cycle, the event that all directed edges in $\mathcal{C}$ are present in the graph (the event $\{\tau : \tau(\langle x,y \rangle) = 1 \text{ for all edges } \langle x,y \rangle \text{ in } \mathcal{C}\}$) is again a cylinder event. By item (C) above, the probability that $\eta$ is in this event is zero. Taking a union over all finite cycles shows item 3.

We now show item 4: a.s.~for each $x \in \mathbb{Z}^d$, the graph $C_x$ is finite. By translation invariance, it suffices to consider $x=0$. Writing $\# C_0$ for the number of vertices in $C_0$, note that because $\{\# C_0 > \lambda\}$ is a cylinder event, we have
\[
\mathbb{P}(\#C_0 > \lambda) = \lim_{i \to \infty} \mathbb{P}\left( \#C_{Z_{n_i}}^0 > \lambda\right).
\]
For $\lambda \geq 2^d$ and $k = \lfloor \log_{2^d} \lambda \rfloor$, write $z_{n_i}$ for the unique point of $\mathsf{O}$ such that $Z_{n_i} \in 2^k z_{n_i} + \mathsf{C}_k$. Then by item (D), 
\begin{align*}
\lim_{i \to \infty} \mathbb{P} \left( \#C_{Z_{n_i}}^0 > \lambda\right) &\leq \lim_{i \to \infty} \mathbb{P} \left( C_{Z_{n_i}} \text{ contains a vertex outside } 2^kz_{n_i} + \mathsf{C}_k\right) \\
&\leq \lim_{i \to \infty} \mathbb{P}\left( \text{at most one coordinate of }Z_{n_i} \text{ is not divisible by }2^k\right) \\
&= 2^{-dk} + d2^{-k}(1-2^{-k})^{d-1} \\
&\leq \frac{d+1}{2^k} \\
&\leq \frac{2(d+1)}{\lambda^{\frac{1}{d}}}.
\end{align*}
Letting $\lambda \to \infty$, we obtain $\#C_0 < \infty$ a.s.~and this proves item 4. 

Finally we prove that the undirected version of $\mathbb{G}$ has one infinite component a.s. For $x \in \mathbb{Z}^d$ and an integer $\lambda>0$, let $E_x = E_x(\lambda)$ be the event that there are directed paths $\pi_0$ in $\Gamma_0$ starting from $0$ and $\pi_x$ in $\Gamma_x$ starting from $x$, both with $\lambda$ many edges, which intersect. Note that this event is defined even when some vertices have out-degree greater than one, but in our graphs $\Gamma_0$ and $\Gamma_x$ are directed paths. (As usual, $\Gamma_x$ is the subgraph induced by the vertices $y$ such that $x \to y$.) Because $E_x$ is also a cylinder event,
\begin{equation}\label{eq: limit_probability}
\mathbb{P}(\eta \in E_x) = \lim_{i \to \infty} \mathbb{P}(\eta_{n_i} \in E_x).
\end{equation}
The event on the right is that $\Gamma_{Z_{n_i}}^0$ and $\Gamma_{Z_{n_i}+x}^0$ (recall these are the paths in the graph $\mathbb{G}_0$) both have at least $\lambda$ many steps, and, following either from its starting point, we intersect the other within $\lambda$ many steps. Note that if every coordinate of $Z_{n_i}$ is at least $\lambda + \|x\|_\infty$, then both paths will have at least $\lambda$ many steps. Furthermore, if both of these points are in the same $2^k$-box $2^kz_{n_i} + \mathcal{C}_k$, then by item (E), the paths will intersect within $d2^k$ steps. Therefore if we put $k = \lfloor \log_2 (\lambda/d) \rfloor$ (for $\lambda \geq 2d$), we see that if $Z_{n_i}$ and $Z_{n_i}+x$ are in the same $2^k$-box, then $\Gamma_{Z_{n_i}}^0$ and $\Gamma_{Z_{n_i}+x}^0$ will intersect within $\lambda$ many steps. This means the right side in \eqref{eq: limit_probability} is at least
\begin{align}
&\lim_{i \to \infty} \mathbb{P}\left( Z_{n_i} \text{ and } Z_{n_i}+x \text{ are in the same }2^k \text{-box and have all coordinates }\geq \lambda\right) \nonumber \\
\geq~& \lim_{i \to \infty} \left[ 1 - \frac{d(\lambda+\|x\|_\infty)}{2^{n_i}} - \mathbb{P}\left( Z_{n_i} \text{ and } Z_{n_i} + x \text{ are in different } 2^k\text{-boxes}\right)\right] \label{eq: continue_with_limit}.
\end{align}

If $Z_{n_i}$ and $Z_{n_i}+x$ are in different $2^k$-boxes, then by \eqref{eq: z_form},
\[
\left( \lfloor (Z_{n_i}/2^k) \cdot e_1 \rfloor, \dots, \lfloor  (Z_{n_i}/2^k) \cdot e_d\rfloor\right) \neq \left( \lfloor ((Z_{n_i} + x)/2^k) \cdot e_1 \rfloor, \dots, \lfloor ((Z_{n_i} + x)/2^k) \cdot e_d\rfloor\right).
\]
The probability of this is at most $2d\|x\|_\infty / 2^k$. Putting this in \eqref{eq: continue_with_limit}, and then combining with \eqref{eq: limit_probability}, we find
\[
\mathbb{P}(\eta \in E_x) \geq \lim_{i \to \infty}  \left[ 1 - \frac{d(\lambda +\|x\|_\infty)}{2^{n_i}} - \frac{2d \|x\|_\infty}{2^k}\right] \geq 1 - \frac{4d^2\|x\|_\infty}{\lambda}.
\]
Recalling the definition of $E_x$, if we take $\lambda \to \infty$, we see that a.s.~both $\Gamma_0$ and $\Gamma_x$ are infinite and intersect. Since this is true for all $x$, a.s.~the undirected version of $\mathbb{G}$ has one infinite component. This completes the proof of the case $d \geq 2$ and $k=1$.

\subsubsection{Case $d\geq 3$ and $k \in [2,\infty)$}
We have seen in Section \ref{sec: infinitely_many} that when $d \geq 3,$ it is possible to construct nearest-neighbor graphs whose undirected versions have infinitely many infinite components, contrary to the situation when $d = 2$. This could lead one to ask about the possibility of some arbitrary finite number $k \geq 2$ of components.
Fix some such integer $k  \in [2, \infty)$ for the remainder of this section. We will explicitly construct a translation-invariant measure $\mathbb{P}$ such that, a.s., $\mathcal{N}$ has exactly $k$ infinite components. We do this by describing how to generate a translation-invariant random directed graph $\mathbb{G}$ which satisfies the hypotheses of Theorem \ref{thm: construction} and whose unoriented version has exactly $k$ infinite components. We write $\mathbb{G} = (\mathbb{Z}^d, \vec{E})$; we construct the (directed) edge set $\vec{E}$ in stages, writing $\vec{E} = \vec{E}^{(1)} \cup \dots \cup \vec{E}^{(k+1)}$.

Recall the nearest-neighbor measure constructed in Section \ref{sec: dyadic}. This was a translation-invariant nearest-neighbor model with the property that, a.s., $\mathcal{N}$ has exactly one infinite component. Let $\mathbb{G}^{(1)}, \ldots, \mathbb{G}^{(k)}$ denote $k$ independent samples from this measure. We begin by using $\mathbb{G}^{(1)}$ to define $\vec{E}^{(1)}$. Given an edge $\{x, x + e_i\}$ of $\mathcal{E}^d$, we include in $\vec{E}^{(1)}$ an appropriate orientation of each of the edges $\{4k x, 4k x + e_i\}, \{4 k x + e_i, 4 k x + 2 e_i\}, \ldots, \{4 k x + (4k-1) e_i, 4k(x+e_i)\}$; exactly one orientation of each of these edges will be chosen to appear in $\vec{E}^{(1)}$. Which orientations of each of these edges appears in $\vec{E}^{(1)}$ depends on whether a) $\langle x, x+e_i\rangle$ is an edge of $\mathbb{G}^{(1)}$, b)$\langle x + e_i, x \rangle$ is an edge of  $\mathbb{G}^{(1)}$, or c) neither $\langle x, x+e_i\rangle$ nor $\langle x + e_i, x \rangle$ is an edge of $\mathbb{G}^{(1)}$ (note that, by the construction of $\mathbb{G}^{(1)}$, these are a.s.~the only possibilities).

\begin{itemize}
\item In case a), the edges $\langle 4 k x + \ell e_i, 4 k x + (\ell + 1) e_i \rangle \in \vec{E}^{(1)}$ for each $0 \leq \ell \leq 4k-1$ (and the other orientation is omitted: $\langle 4 k x + (\ell+1) e_i, 4 k x + \ell e_i \rangle \notin \vec{E}^{(1)}$).
\item Case b) is identical but reflected: each edge $\langle 4 k x + (\ell+1) e_i, 4 k x + \ell e_i \rangle \in \vec{E}^{(1)}$ for each $0 \leq \ell \leq 4k-1$, and the other orientation is again omitted.
  \item   Lastly, in case c), we orient edges toward the closer of $4 k x$ and $4k(x + e_i)$: the edges $\langle 4 k x + e_i, 4 k x\rangle,$ $\langle 4 k x + 2 e_i, 4 k x + e_i \rangle,$ \ldots, $\langle 4kx + 2 k e_i, 4 k x + (2k-1) e_i\rangle \in \vec{E}^{(1)}$, and also $\langle 4kx + (2k+1) e_i, 4kx+ (2k+2) e_i\rangle, \ldots, \langle 4kx + (4k-1) e_i, 4k(x+ e_i)\rangle \in \vec{E}^{(1)}$, with none of the reversed orientations of these edges appearing in $\vec{E}^{(1)}$.
  \end{itemize}
  We can think of this construction as in a sense ``stretching out the lattice $\mathbb{Z}^d$'' by a factor $4k$; each edge of the lattice is turned into a segment of $4k$ edges. The above definition guarantees that these segments are traversed by an oriented path in $\vec{E}^{(1)}$ exactly when the corresponding ``un-stretched'' edges of $\vec{\mathcal{E}}^d$ appear in $\mathbb{G}^{(1)}$.

Let $V^{(1)}$ denote the set of endpoints of the edges considered above --- in other words,
\[V^{(1)} = \{4k x + \ell e_i: \, x \in \mathbb{Z}^d,\, 0 \leq \ell \leq 4k-1,\, 1 \leq i \leq d\}. \]
We can consider $\vec{E}^{(1)}$ as inducing a random directed graph with vertex set $V^{(1)}$. We note several properties of this graph which follow directly from the definition and from properties of $\mathbb{G}^{(1)}$. First,
\begin{equation}
  \label{eq:outdegthreed}
  \text{a.s., each $y \in V^{(1)}$ has out-degree one in $\vec{E}^{(1)}$;}
\end{equation}
Next, for each $x \in \mathbb{Z}^d$,
  \begin{align}\label{eq:backbd}
    \#\{y \in V^{(1)}: \, y \to 4 k x \text{ by a path in $\vec{E}^{(1)}$}\} \leq 8 k d \# \{z \in \mathbb{Z}^d: \, z\to x \text{ by a path in $\mathbb{G}^{(1)}$}  \}.
  \end{align}
From \eqref{eq:backbd}, we immediately see that
\begin{equation}
  \label{eq:backbd2}
  \text{a.s., for each $y \in V^{(1)}$,} \quad \# \{z \in V^{(1)}: \, z \to y \text{ by a path in $\vec{E}^{(1)}$}\} < \infty.
\end{equation}
No edge of $\vec{E}^{(j)},$ $j \geq 2$ will be incident to any vertex of $V^{(1)}$, so the above properties will be preserved throughout the remainder of the construction. Statements \eqref{eq:outdegthreed} and \eqref{eq:backbd2} (and their analogues for the endpoints of edges in $\vec{E}^{(j)},$ $j \geq 2$) will guarantee that the graph $\mathbb{G}$ satisfies the hypotheses of Theorem \ref{thm: construction} and hence can be represented as a nearest neighbor graph.

Let $E^{(1)}$ denote the set of undirected versions of edges in $\vec{E}^{(1)}$; a final important property of the above is that
\[(V^{(1)}, E^{(1)}) \text{ a.s.~has exactly one infinite component}. \]
This is again easy to see from the definition. Indeed, the construction above preserves the component structure of vertices of the form $4 k x$: there is a path of edges of $E^{(1)}$ from $4 k x_1$ to $4 k x_2$ if and only if there is a path of (undirected versions of) edges of $\mathbb{G}^{(1)}$ from $x_1$ to $x_2$, and other vertices of $V^{(1)}$ can only connect up via vertices of the form $4 k x$.

The construction of $\vec{E}^{(2)}, \ldots, \vec{E}^{(k)}$ proceeds analogously, but on shifted sublattices. The vertex set $V^{(2)} = V^{(1)} + (4, 4, \ldots, 4)$.
Edges are included in $\vec{E}^{(2)}$ via an analogous version of the above procedure, but using the realization $\mathbb{G}^{(2)}$ and with the entire construction shifted by $(4, \ldots, 4)$; we use the status of edges of the form $\langle x, x + e_i\rangle$ to determine the status of directed versions of edges of the form $\{4 k x + (4, 4, \ldots, 4) + \ell e_i, 4  k x + (4, 4, \ldots, 4) + (\ell + 1) e_i\}$. To construct $\vec{E}^{(3)}$, we proceed analogously but with $V^{(3)} = V^{(1)} + (8, 8, \ldots, 8)$, and so on. A vertex $x \in V^{(j)}$ has the property that
\begin{equation}
 \label{eq:coordmod}
 x \cdot e_i \equiv 4(j-1) \, \mod 4k
 \end{equation}
for at least $d-1$ values of $i$; hence,  $V^{(j)} \cap V^{(k)} = \emptyset$ when $j \neq k$. In other words, we have constructed $k$ ``noninteracting'' and independent directed graphs on distinct sublattices of $\mathbb{Z}^d$, each of which obeys the properties \eqref{eq:outdegthreed} and \eqref{eq:backbd}, and whose undirected version a.s.~has exactly one infinite component.

To complete the construction, we must choose $\vec{E}^{(k+1)}$ in a way which guarantees the hypotheses of Theorem \ref{thm: construction} are satisfied. Once this is done, we will have constructed a random graph which is realizable as a nearest-neighbor graph, though (as in the construction in Section \ref{sec: dyadic}) this nearest-neighbor model will not be translation-invariant. To finish the construction and recover translation invariance, we conclude by shifting the entire graph by an independent random integer vector in the cube $[0, 4k-1)^d$.

It remains to choose $\vec{E}^{(k+1)}$.  Suppose we chose $\vec{E}^{(k+1)} = \emptyset$, or in other words had the (oriented) edge set of $\mathbb{G}$ be $\vec{E}^{(1)}\cup \dots \cup \vec{E}^{(k)}$. Then the hypotheses of Theorem \ref{thm: construction} would not be satisfied: the set $V^{(k+1)}:= \mathbb{Z}^d \setminus (V^{(1)} \cup \dots \cup V^{(k)})$ is nonempty, and each vertex of $V^{(k+1)}$ would have both out- and in-degree zero under this oriented edge set. We complete the construction by choosing $\vec{E}^{(k+1)}$ in a way such that
\begin{itemize}
\item Each vertex of $V^{(k+1)}$ has out-degree one in $(V^{(k+1)},\vec{E}^{(k+1)})$;
\item No edge of $\vec{E}^{(k+1)}$ connects a vertex of $V^{(k+1)}$ to a vertex of $\mathbb{Z}^d \setminus V^{(k+1)}$ (or vice-versa);
  \item The undirected version $E^{(k+1)}$ of $\vec{E}^{(k+1)}$ has components of $\ell^{\infty}$ diameter no larger than $4k$.
  \end{itemize}
  These properties guarantee that the directed graph $(V^{(k+1)}, \vec{E}^{(k+1)})$ again ``does not interact with'' the graphs $(V^{(i)}, \vec{E}^{(i)})$ for $i \leq k$, and that the graph $\mathbb{G}$ satisfies the hypotheses of Theorem \ref{thm: construction}. Moreover, since there are no infinite components in $(V^{(k+1)}, \vec{E}^{(k+1)})$, the graph $\mathbb{G}$ has exactly $k$ infinite components.

  We choose the edges of  $\vec{E}^{(k+1)}$ to have both endpoints in a common cube of the form $4 k x + [-2 k, 2k)^d$; this will guarantee the diameter condition above holds. To do this, for each $x \in \mathbb{Z}^d$, consider the site-components of $R_x:=  \left(4 k x + [-2 k, 2k)^d\right) \setminus (V^{(1)} \cup \ldots \cup V^{(k)})$.  For each site-component $\mathcal{C}$, choose a deterministic spanning tree of the vertices of $\mathcal{C}$ oriented toward some deterministic root, then insert a single additional oriented edge from this root toward one of its neighbors (note that this choice can be done in a non-random way, identically for each $x$; all of the $R_x$'s are translates of one another).

  This construction guarantees that the above bulleted properties hold: the latter two are obvious, and the first holds by construction as long as each site-component $\mathcal{C}$ has at least two vertices.
  To see why each site-component has at least two vertices, consider a vertex $y \in R_0$, and let $\mathcal{C}$ be the corresponding site component of $y$. By symmetry, we may assume $y \cdot e_1 \geq 0$. If $y + e_1 \in \mathcal{C}$, then we are done. Otherwise, if $y + e_1 \notin \mathcal{C}$, then  either i) $y  + e_1 \notin [-2k, 2k)^d$ or ii) $y + e_1 \in V^{(j)}$ for some $1 \leq j \leq k$. In case i), we have $y \cdot e_1 = 2k$, and then $(y - e_1) \cdot e_1 = 2k-1$, whence $y-e_1$ cannot be in $V^{(m)}$ for any $1 \leq m \leq k$. In case ii), $y - e_1 \in [-2k, 2k)^d$  since $y \cdot e_1 \geq 0$. Moreover, $y - e_1 \notin V^{(j)}$, since otherwise we would also have $y \in V^{(j)}$. Lastly, $y-e_1$ cannot be in $V^{(m)}$ for any $m \neq j$, because then $V^{(j)}$ and $V^{(m)}$ would be at Euclidean distance two from each other. We conclude that in either case, $y - e_1$ is also an element of $y$'s site component $\mathcal{C}$. Thus, the site components of $R_x$ are not singletons, and so the first bulleted property above holds.

\bigskip
\noindent
{\bf Acknowledgements.} The research of M. D. is supported by an NSF CAREER grant. The research of J. H. is supported by NSF grant DMS-161292, and a PSC-CUNY Award, jointly funded by The Professional Staff Congress and The City University of New York. M. D. thanks A. Krishnan for pointing out reference \cite{ZernerMerkl}.


\begin{thebibliography}{1}

\bibitem{AH}
Ahlberg, D.; Hoffman, C. Random coalescing geodesics in first-passage percolation. preprint, 2016.

\bibitem{Ballister}
Ballister, P.; Bollob\'as, B. Percolation in the $k$-nearest neighbor graph. In {\it Recent Results in Designs and Graphs: a Tribute to Lucia Gionfriddo,} Quaderni di Matematica, Volume 28, edited by Marco Buratti, Curt Lindner, Francesco Mazzocca, and Nicola Melone. (2013), 83--100.

\bibitem{BD}
Boivin, D.; Derrien, J.-M. Geodesics and recurrence of random walks in disordered systems. {\it Ann. Probab.} {\bf 7} (1991), 101--115.

\bibitem{BDH20}
Brito, G.; Damron, M.; Hanson, J. Absence of backward infinite paths for first-passage percolation in arbitrary dimension. preprint, 2020.

\bibitem{BK2} Burton, R.; Keane, M. Topological and metric properties of infinite clusters in stationary two-dimensional site-percolation. {\it Isr. J. Math.} {\bf 76} (1991), 299--316.

\bibitem{CK}
Chaika, J.; Krishnan, A. Stationary coalescing walks on the lattice. {\it Probab. Theory Relat. Fields.} {\bf 175} (2018), 655--675.

\bibitem{CK2}
Chaika, J.; Krishnan, A. Stationary coalescing walks on the lattice II: Entropy. preprint, 2019.

\bibitem{DH14}
Damron, M.; Hanson, J. Busemann functions and infinite geodesics in two-dimensional first-passage percolation. {\it Commun. Math. Phys.} {\bf 325} (2014), 917--963.

\bibitem{G}
Guiol, H. A note about Burton Keane's theorem. preprint, 1997.

\bibitem{haggstrom}
H\"aggstr\"om, O. Invariant percolation on trees and the mass-transport method. In {\it Bulletin of the International Statistical Institute.} 52nd session proceedings, Tome LVIII, Book 1, Helsinki, 1999, pp. 363--366.

\bibitem{HaggstromMeester}
H\"aggstr\"om, O.; Meester, R. Nearest neighbor and hard sphere models in continuum percolation. {\it Random Structures Algorithms.} {\bf 9} (1996), 295--315.

\bibitem{HarrisMeester}
Harris, M.; Meester, R. Nontrivial phase transition in a dependent parametric bond percolation model. {\it Markov Process. Related Fields.} {\bf 2} (1996), 513--528.

\bibitem{Kozakova}
Kozakova, I.; Meester, R.; Nanda, S. The size of components in continuum nearest-neighbor graphs. {\it Ann. Probab.} {\bf 34} (2006), 528--538.

\bibitem{lyonsperes}
Lyons, R.; Peres, Y. {\it Probability on Trees and Networks.} Vol. 42. Cambridge University Press, 2017.

\bibitem{MR1716765}
Nanda, S.; Newman, C. M. Random nearest neighbor and influence graphs on $\mathbb{Z}^d$. {\it Random Structures Algorithms.} {\bf 15} (1999), 262--278.

\bibitem{Pittel}
Pittel, B.; Weishaar, R. The random bipartite nearest neighbor graphs. {\it Random Structures Algorithms.} {\bf 15} (1999), 279--310.

\bibitem{ZernerMerkl}
Zerner, M.; Merkl, F. A zero-one law for planar random walks in random environment. {\it Ann. Probab.} {\bf 29} (2001), 1716--1732.

\end{thebibliography}
\end{document}